\newcommand{\F}{\mathbb{F}}
\newcommand{\gauss}[3]{\genfrac{[}{]}{0pt}{}{#1}{#2}_{#3}}
\newcommand{\qnumb}[2]{[#1]_{#2}}
\newcommand{\SPG}[2]{\operatorname{PG}(#1,#2)}
\newcommand{\PG}{\operatorname{PG}}
\newcommand{\aspace}{\SPG{v-1}{\mathbb{F}_q}}
\newcommand{\vek}[1]{\mathbf{#1}}
\newcommand{\hdist}{\mathrm{d}}
\newcommand{\hweight}{\mathrm{w}}
\newcommand{\sdist}{\mathrm{d}_{\mathrm{S}}}
\newcommand{\smax}{\mathrm{A}}
\newcommand{\card}[1]{\##1}
\newcommand{\snumb}[3]{s_{#3}(#1,#2)} 
\newcommand{\frobenius}[2]{\mathrm{F}_{#2}(#1)} 
\newcommand{\frobeniusproj}[2]{\bar{\mathrm{F}}_{#2}(#1)} 
\def\llceil{\lceil\kern-3.2pt\lceil}
\def\rrceil{\rceil\kern-3.2pt\rceil}
\def\llfloor{\lfloor\kern-3.2pt\lfloor}
\def\rrfloor{\rfloor\kern-3.2pt\rfloor}
\def\leftllceil{\left\lceil\kern-3.2pt\left\lceil}
\def\rightrrceil{\right\rceil\kern-3.2pt\right\rceil}
\def\leftllfloor{\left\lfloor\kern-3.2pt\left\lfloor}
\def\rightrrfloor{\right\rfloor\kern-3.2pt\right\rfloor}
\def\bigllfloor{\bigg\lfloor\kern-3.2pt\bigg\lfloor}
\def\bigrrfloor{\bigg\rfloor\kern-3.2pt\bigg\rfloor}
\newtheorem{theorem}{Theorem}
\newtheorem{proposition}{Proposition}
\newtheorem{lemma}{Lemma}
\newtheorem{corollary}{Corollary}
\theoremstyle{definition}
\newtheorem{definition}{Definition}
\newtheorem{remark}{Remark}
\newtheorem{example}{Example}
\begin{document}

\title{On the lengths of divisible codes}

\author{Michael Kiermaier and Sascha Kurz}
\address{Michael Kiermaier,University of Bayreuth, 95440 Bayreuth, Germany}
\email{michael.kiermaier@uni-bayreuth.de}
\address{Sascha Kurz, University of Bayreuth, 95440 Bayreuth, Germany}
\email{sascha.kurz@uni-bayreuth.de}

\abstract{In this article, the effective lengths of all $q^r$-divisible linear codes over $\F_q$ with a non-negative integer $r$ are determined.
For that purpose, the $S_q(r)$-adic expansion of an integer $n$ is introduced.
It is shown that there exists a $q^r$-divisible $\F_q$-linear code of effective length $n$ if and only if the leading coefficient of the $S_q(r)$-adic expansion of $n$ is non-negative.
Furthermore, the maximum weight of a $q^r$-divisible code of effective length $n$ is at most $\sigma q^r$, where $\sigma$ denotes the cross-sum of the $S_q(r)$-adic expansion of $n$.

This result has applications in Galois geometries.
A recent theorem of N{\u{a}}stase and Sissokho on the maximum size of a partial spread follows as a corollary.
Furthermore, we get an improvement of the Johnson bound for constant dimension subspace codes.
}}
\maketitle

\section{Introduction}
A linear code $C$ is said to be \emph{$\Delta$-divisible} with $\Delta\in\mathbb{Z}_{\geq 1}$ if all its weights are multiples of $\Delta$.
Divisible codes have been introduced by Ward in~1981 \cite{ward1981divisible}, see \cite{ward2001divisible_survey} for a survey. 
There are relations to self-orthogonal codes \cite{ward1981divisible,ward2001divisible_survey}, Griesmer-optimal codes \cite{Ward-1998-JCTSA83[1]:79-93,ward2001divisible_survey} and, as it will be pointed out in this article, to certain configurations in Galois geometries.
The main case of interest is that $\Delta$ is a power of the characteristic of the base field.%
\footnote{By \cite[Th.~1]{ward1981divisible}, for $\Delta = p^e d$ with $p$ the characteristic of the base field $\F_q$ and $p \nmid d$, each full-length $\Delta$-divisible $\F_q$-linear code is the $d$-fold repetition of a $p^e$-divisible $\F_q$-linear code.}

The \enquote{divisible code bound} of~\cite{ward1992bound,ward2001divisible} gives an upper bound on the dimension of a divisible code.
In this article, we focus on the lengths of $q^r$-divisible $\F_q$-linear codes, without any restriction on the dimension.
As the length of a divisible can always be increased by adding an arbitrary number of all-zero coordinates, it is natural to look at the \emph{effective length}, which is the number of coordinates which are not all-zero.
Codes without all-zero coordinate are called \emph{full-length}.

For a fixed prime power $q$, a non-negative integer $r$ and $i\in\{0,\ldots,r\}$, we define
    \[
	    \snumb{r}{i}{q}
	    := q^i\cdot\qnumb{r-i+1}{q}
	    = \frac{q^{r+1}-q^i}{q-1}
	    =\sum_{j=i}^r q^{j}
	    =q^i + q^{i+1} + \ldots + q^r\text{.}
    \]
The number $\snumb{r}{i}{q}$ is divisible by $q^i$, but not by $q^{i+1}$.
This property allows us to create kind of a positional system upon the sequence of base numbers
\[
	S_q(r) := (\snumb{r}{0}{q}, \snumb{r}{1}{q},\ldots, \snumb{r}{r}{q})\text{.}
\]
As discussed in~Section~\ref{sect:sqradic}, each integer $n$ has a unique \emph{$S_q(r)$-adic expansion}
\begin{equation}
	\label{eq:sqadic}
	n = \sum_{i=0}^r a_i \snumb{r}{i}{q}
\end{equation}
with $a_0,\ldots,a_{r-1}\in\{0,\ldots,q-1\}$ and \emph{leading coefficient} $a_r\in\mathbb{Z}$.
The sum $a_0 + a_1 + \ldots + a_r$ will be called the \emph{cross sum} of the $S_q(r)$-adic expansion of $n$.

Based on the $S_q(r)$-adic expansion we can state our main theorem.

\begin{theorem}
  \label{thm:characterization}
  Let $n\in\mathbb{Z}$ and $r\in\mathbb{N}_0$.
  The following are equivalent:
  \begin{enumerate}[(i)]
  \item\label{thm:characterization:card} There exists a full-length $q^r$-divisible linear code of length $n$ over $\F_q$.
  \item\label{thm:characterization:n_strong} The leading coefficient of the $S_q(r)$-adic expansion of $n$ is non-negative.
  \end{enumerate}
\end{theorem}

The proof of the theorem will use the correspondence between full-length $\F_q$-linear codes and multisets of points in a finite projective geometry over $\F_q$.
As a byproduct of the proof, we get the following theorem on the maximum weight of a divisible code:

\begin{theorem}
  \label{thm:max_weight}
  Let $C$ be a $q^r$-divisible code of effective length $n$.
  Then the maximum weight of $C$ is at most $\sigma q^r$, where $\sigma$ denotes the cross-sum of the $S_q(r)$-adic expansion of $n$.
\end{theorem}

This article is structured as follows:
In Section~\ref{sect:prelim}, the necessary preliminaries are provided.
As the geometric counterpart of divisible linear codes, divisible multisets of points are discussed in Section~\ref{sect:divisible}.
The $S_q(r)$-adic expansion of an integer is introduced in Section~\ref{sect:sqradic}.
The proof of the two stated theorems follows in Section~\ref{sect:main}, which also contains the determination (Proposition~\ref{prop:frobenius}) of the largest integer which is not realizable as the effective length of a $q^r$-divisible $\F_q$-linear code.
In analogy to the \emph{Frobenius Coin Problem}, these numbers will be denoted by $\frobenius{r}{q}$.
In Section~\ref{sect:rounding}, a notion of sharpened rounding will be studied, which is based on the existence of certain divisible codes.
It is a preparation for Section~\ref{sect:application}, where two applications of Theorem~\ref{thm:characterization} in Galois geometry will be presented.
In Section~\ref{subsect:nastase_sissokho}, it is demonstrated that a recent result of N{\u{a}}stase and Sissokho on the maximum size of a partial spread follows as a corollary from Theorem~\ref{thm:characterization}.
In Section~\ref{subsect:johnson}, we get an improvement of the Johnson bound for constant dimension subspace codes. In many cases, this leads to the sharpest known upper bound on the size of a constant dimension subspace code.
Section~\ref{sect:linear_programming} analyses the relation of Theorem~\ref{thm:characterization} to the linear programming bound, which is based on the MacWilliams equations.
In Section~\ref{sect:conclusion}, we conclude with the discussion of two related open problems.

\section{Preliminaries}
\label{sect:prelim}
In this article, $q$ denotes a prime power $>1$ and $V$ an $\F_q$-vector space of finite dimension $v$.
Ordered by inclusion, the set of all $\F_q$-subspaces of $V$ forms a finite modular geometric lattice with meet $X\wedge Y=X\cap Y$, join $X\vee Y=X+Y$, and rank function $X\mapsto\dim(X)$.
This \emph{subspace lattice} of $V$ is also known as the \emph{projective geometry} $\PG(V)$.
Up to isomorphism, $\PG(V)$ only depends on the order $q$ of the base field and the (\emph{algebraic}) dimension $v$, justifying the notion \emph{projective geometry} $\PG(v-1,q)$ of (\emph{geometric}) dimension $v-1$ over $\F_q$.
A $k$-dimensional subspace of the $\F_q$-vector space $V$ will simply be called \emph{$k$-subspace}.
The set of all $k$-subspaces of $V$ will be denoted by $\gauss{V}{k}{q}$.
Its cardinality is given by the Gaussian binomial coefficient
\[
\gauss{v}{k}{q} =
\#\gauss{V}{k}{q} =
\begin{cases}
	\frac{(q^v-1)(q^{v-1}-1)\cdots(q^{v-k+1}-1)}{(q^k-1)(q^{k-1}-1)\cdots(q-1)} & \text{if }0\leq k\leq v\text{;}\\
	0 & \text{otherwise.}
\end{cases}
\]
Furthermore, we use the abbreviation $\qnumb{v}{q} = \gauss{v}{1}{q}$, which is the $q$-analog of the number $v$.
As usual, $1$-subspaces are called \emph{points} and $(v-1)$-subspaces are called \emph{hyperplanes} of $\PG(V)$.

The theory of the finite projective geometries $\PG(v-1,q)$ is known as \emph{Galois geometry}.
As the subspace lattice of a $v$-dimensional $\F_q$-vector space is commonly seen as the $q$-analog of the subset lattice of a finite $v$-element set, Galois geometry can also be seen as \emph{$q$-analog combinatorics}.

A multiset $\mathcal{S}$ on a base set $X$ can be identified with its characteristic function $\chi_X : X \to \mathbb{N}_0$, mapping $x$ to the multiplicity of $x$ in $\mathcal{S}$.
The \emph{cardinality} of $\mathcal{S}$ is $\#\mathcal{S} = \sum_{x\in X} \chi_{\mathcal{S}}(x)$.
$\mathcal{S}$ may also be called a \emph{$(\#\mathcal{S})$-multiset}.
The multiset union $\mathcal{S} \uplus \mathcal{S}'$ of two multisets $\mathcal{S}$ and $\mathcal{S}'$ is given by the sum $\chi_{\mathcal{S}} + \chi_{\mathcal{S}'}$ of the corresponding characteristic functions.
The $q$-fold repetition $q\mathcal{S}$ of a multiset $\mathcal{S}$ is given by the characteristic function $q\chi_{\mathcal{S}}$.

A multiset $\mathcal{S}$ is called \emph{spanning} in $V$ if $\langle \mathcal{S}\rangle_{\F_q} = V$.
For a multiset of points $\mathcal{P}$ in $\PG(V)$ and a hyperplane $H \leq V$, we define the restricted multiset $\mathcal{P} \cap H$ via its characteristic function
\[
	\chi_{\mathcal{P} \cap H}(P) = 
	\begin{cases}
	\chi_{\mathcal{P}}(P) & \text{if }P \leq H\text{;} \\
	0 & \text{otherwise.}
	\end{cases}
\] 
Then $\#(\mathcal{P} \cap H) = \sum_{P\in\gauss{H}{1}{q}} \chi_{\mathcal{P}}(P)$.

It is well-known (see, e.g., \cite[Prop.~1]{tsfasman-vladut95} and \cite{dodunekov1998codes}) that
the relation $C\to\mathcal{C}$, associating with a full-length linear
$[n,v]$ code $C$ over $\F_q$ the $n$-multiset $\mathcal{C}$ of points
in $\aspace$ defined by the columns of any generator matrix, induces a
one-to-one correspondence between classes of \mbox{(semi-)linearly}
equivalent spanning multisets and classes of \mbox{(semi-)linearly}
equivalent full-length linear codes.  The importance
of the correspondence lies in the fact that it relates
coding-theoretic properties of $C$ to geometric or combinatorial
properties of $\mathcal{C}$ via
\begin{equation}
  \label{eq:hweight-npoints}
  \hweight(\vek{a}G)=n-\card{\{1\leq j\leq
  n;\vek{a}\cdot\vek{g}_j=0\}}
  =n-\card{(\mathcal{C}\cap\vek{a}^\perp)},
\end{equation}
where $\hweight$ denotes the Hamming weight,
$G=(\vek{g}_1\mid\dots\mid\vek{g}_n)\in\F_q^{v\times n}$ a generating
matrix of $C$, $\vek{a}\cdot\vek{b}=a_1b_1+\dots+a_vb_v$, and
$\vek{a}^\perp$ is the hyperplane in $\aspace$ with equation
$a_1x_1+\dots+a_vx_v=0$.
In the usual coding theoretic setting, the Hamming weight depends on the chosen basis, as the standard basis vectors are exactly the vectors of Hamming weight $1$.
In contrast to that, the geometric setting provides a basis-free approach to linear codes.

\section{Divisible multisets of points}
\label{sect:divisible}

The geometric counterpart of full-length divisible linear codes are divisible multisets of points:

\begin{definition}
  Let $\mathcal{P}$ be a multiset of points in $V$ and $r\in\{0,\ldots,v-1\}$.
  If
  \[
	  \#(\mathcal{P}\cap H) \equiv \#\mathcal{P} \pmod {q^{r}}
  \]
  for every hyperplane $H \leq V$, then $\mathcal{P}$ is called \emph{$q^r$-divisible}.
\end{definition}

If we speak of a $q^r$-divisible multiset $\mathcal{P}$ of points without specifying the ambient space $V$ or its dimension $v$, we assume that the points in $\mathcal{P}$ are contained in an ambient space $V$ of a suitable finite dimension $v$.
This is justified by the following lemma:

\begin{lemma}
	\label{lem:ambient_space_unwichtig}
	Let $V_1 < V_2$ be $\F_q$-vector spaces and $\mathcal{P}$ a multiset of points in $V_1$.
	Then $\mathcal{P}$ is $q^r$-divisible in $V_1$ if and only if $\mathcal{P}$ is $q^r$-divisible in $V_2$.
\end{lemma}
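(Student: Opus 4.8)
The plan is to transfer the divisibility condition between the two ambient spaces through the map $H_2 \mapsto H_2 \cap V_1$ sending a hyperplane of $V_2$ to its trace on $V_1$. The key observation, which I would record first, is that since every point of $\mathcal{P}$ lies in $V_1$, a point $P \le V_1$ satisfies $P \le H_2$ if and only if $P \le H_2 \cap V_1$, for any hyperplane $H_2 \le V_2$. Comparing characteristic functions yields the multiset identity $\mathcal{P} \cap H_2 = \mathcal{P} \cap (H_2 \cap V_1)$, and in particular $\#(\mathcal{P} \cap H_2) = \#\bigl(\mathcal{P} \cap (H_2 \cap V_1)\bigr)$. Thus the count attached to a hyperplane of $V_2$ is already determined by its intersection with $V_1$.

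Next I would describe what these traces look like. For a hyperplane $H_2 \le V_2$ there are two cases. If $V_1 \le H_2$, then $\mathcal{P} \cap H_2 = \mathcal{P}$, so the congruence $\#(\mathcal{P}\cap H_2) \equiv \#\mathcal{P} \pmod{q^r}$ holds trivially. If $V_1 \not\le H_2$, then, using that a hyperplane is a maximal proper subspace, $H_2 + V_1 = V_2$; the dimension formula $\dim(H_2 \cap V_1) = \dim H_2 + \dim V_1 - \dim(H_2+V_1)$ then gives $\dim(H_2 \cap V_1) = \dim V_1 - 1$, so $H_2 \cap V_1$ is a hyperplane of $V_1$. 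Together with the identity from the first step this settles the forward implication: if $\mathcal{P}$ is $q^r$-divisible in $V_1$, then for every hyperplane $H_2 \le V_2$ the value $\#(\mathcal{P} \cap H_2)$ equals either $\#\mathcal{P}$ or $\#(\mathcal{P} \cap H_1)$ for a genuine hyperplane $H_1 = H_2 \cap V_1$ of $V_1$, and in both cases it is congruent to $\#\mathcal{P}$ modulo $q^r$.

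For the converse I need that every hyperplane $H_1 \le V_1$ arises as $H_2 \cap V_1$ for some hyperplane $H_2 \le V_2$. I would construct $H_2$ explicitly: fix a complement $W$ of $V_1$ in $V_2$, so that $V_2 = V_1 \oplus W$, and set $H_2 = H_1 + W$. A short dimension count gives $\dim H_2 = \dim V_2 - 1$, and $V_1 \cap W = \{0\}$ forces $H_2 \cap V_1 = H_1$. Applying $q^r$-divisibility in $V_2$ to this $H_2$, together with the first-step identity, yields $\#(\mathcal{P}\cap H_1) = \#(\mathcal{P} \cap H_2) \equiv \#\mathcal{P} \pmod{q^r}$ for every hyperplane $H_1$ of $V_1$, as desired.

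The argument is elementary linear algebra throughout, so I do not expect a serious obstacle. The only points demanding care are the case distinction $V_1 \le H_2$ versus $V_1 \not\le H_2$ in the forward direction (the former producing the congruence for free) and the explicit extension of $H_1$ to $H_2$ in the converse. I would also glance at the degenerate low-dimensional situations, such as $\dim V_1 \le 1$, to confirm that nothing breaks, but these are covered by the same reasoning.
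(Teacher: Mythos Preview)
Your proof is correct and follows essentially the same approach as the paper: both use the identity $\mathcal{P}\cap H_2 = \mathcal{P}\cap(H_2\cap V_1)$, the dichotomy that $H_2\cap V_1$ is either $V_1$ or a hyperplane of $V_1$, and the fact that every hyperplane of $V_1$ extends to a hyperplane of $V_2$. The only difference is that you spell out the dimension arguments and the explicit extension $H_2 = H_1 + W$ via a complement, whereas the paper simply asserts these facts.
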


\begin{proof}
	Assume that $\mathcal{P}$ is $q^r$-divisible in $V_1$.
	Let $H$ be a hyperplane of $V_2$.
	Then $\#(\mathcal{P} \cap H) = \#(\mathcal{P} \cap (H \cap V_1))$.
	$H \cap V_1$ is either $V_1$ or a hyperplane in $V_1$.
	In the first case, the expression equals $\#\mathcal{P}$, and in the second case, it is congruent to $\#\mathcal{P}\pmod{q^r}$ by $q^r$-divisibility of $\mathcal{P}$ in $V_1$.

	Now assume that $\mathcal{P}$ is $q^r$-divisible in $V_2$, and let $H'$ be a hyperplane of $V_1$.
	There is a hyperplane $H$ in $V_2$ such that $H \cap V_1 = H'$.
	So $\#(\mathcal{P}\cap H') = \#(\mathcal{P}\cap H) \equiv \#\mathcal{P}\pmod{q^r}$ by $q^r$-divisibility of $\mathcal{P}$ in $V_2$.
\end{proof}

\begin{lemma}
	\label{lem:qr-div-basic}
	\begin{enumerate}[(a)]
		\item\label{lem:qr-div-basic:subspace} Let $U$ be a $q$-vector space of dimension $k \geq 1$. The set $\gauss{U}{1}{q}$ of points contained in $U$ is $q^{k-1}$-divisible.
		\item\label{lem:qr-div-basic:union} For $q^r$-divisible multisets $\mathcal{P}$ and $\mathcal{P}'$ in $V$, the multiset union $\mathcal{P} \uplus \mathcal{P}'$ is $q^r$-divisible.
		\item\label{lem:qr-div-basic:qfold} The $q$-fold repetition of a $q^r$-divisible multiset $\mathcal{P}$ is $q^{r+1}$-divisible.
	\end{enumerate}
\end{lemma}

\begin{proof}
	For part~\ref{lem:qr-div-basic:subspace}, we take the ambient space $V = U$.
	Let $H$ be a hyperplane of $V$.
	Then $U \cap H$ is a $(k-1)$-space and therefore
	\[
		\#(\gauss{U}{1}{q}\cap H) = \qnumb{k-1}{q} \equiv \qnumb{k}{q} = \#\gauss{U}{1}{q}\pmod {q^{k-1}}\text{.}
  \]
  	Parts~\ref{lem:qr-div-basic:union} and~\ref{lem:qr-div-basic:qfold} are clear from looking at the characteristic functions.
\end{proof}

A subspace $U \leq V$ is commonly identified with the set $\gauss{U}{1}{q}$ of points covered by $U$.
With that identification, Lemma~\ref{lem:qr-div-basic}\ref{lem:qr-div-basic:subspace} simply states that every $k$-subspace is $q^{k-1}$-divisible.
The corresponding linear code is the $q$-ary simplex code of dimension $k$.
In the case $\langle \mathcal{P}\rangle_{\F_q} \cap\langle \mathcal{P}'\rangle_{\F_q} = \{\vek{0}\}$, the multiset union in Lemma~\ref{lem:qr-div-basic}\ref{lem:qr-div-basic:union} corresponds to the direct sum of linear codes, and in the case $\langle \mathcal{P}\rangle_{\F_q}  = \langle \mathcal{P}'\rangle_{\F_q}$ it corresponds to the juxtaposition.
The construction in Lemma~\ref{lem:qr-div-basic}\ref{lem:qr-div-basic:qfold} corresponds to the $q$-fold repetition of a linear code.

For $\lambda\in\mathbb{N}_0$ and a multiset $\mathcal{P}$ of points with maximum point multiplicity at most $\lambda$, we define the \emph{$\lambda$-complementary} multiset $\bar{\mathcal{P}}$ by $\chi_{\bar{\mathcal{P}}}(P)=\lambda-\chi_{\mathcal{P}}(P)$ for all $P\in\gauss{V}{1}{q}$.

\begin{lemma}
  \label{lemma_t_complement}
  Let $\lambda\in\mathbb{N}_0$ and $\mathcal{P}$ a multiset of points in $V$ of maximum point multiplicity at most $\lambda$.
  Let $r\in\{0,\ldots,v-1\}$.
  Then $\mathcal{P}$ is $q^r$-divisible if and only its $\lambda$-complement is.
\end{lemma}

\begin{proof}
  By Lemma~\ref{lem:qr-div-basic}\ref{lem:qr-div-basic:subspace}, $\gauss{V}{1}{q}$ is $q^{v-1}$-divisible.
  By $r < v$, it is $q^r$-divisible.
  Now the result follows from $\chi_{\mathcal{P}} + \chi_{\bar{\mathcal{P}}}  = \lambda\chi_{\gauss{V}{1}{q}}$.
\end{proof}




\begin{lemma}
  \label{lemma_heritable}
  Let $\mathcal{P}$ be a $q^r$-divisible multiset of points in $V$ and $U$ a subspace of $V$ of codimension $j\in\{0,\ldots,r\}$.
  Then the restriction $\mathcal{P}\cap U$ is a $q^{r-j}$-divisible multiset in $U$.
\end{lemma}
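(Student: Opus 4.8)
The plan is to reduce everything to the case of codimension $j=1$ and then iterate along a flag. For the general statement I would fix a chain $V = U_0 > U_1 > \dots > U_j = U$ in which each $U_{i}$ is a hyperplane of $U_{i-1}$, and argue by induction on $i$ that $\mathcal{P}\cap U_i$ is $q^{r-i}$-divisible in $U_i$. The base case $i=0$ is just the hypothesis. Since $U_i \leq U_{i-1}$ we have $(\mathcal{P}\cap U_{i-1})\cap U_i = \mathcal{P}\cap U_i$, so each inductive step is exactly an instance of the codimension-one case applied to the multiset $\mathcal{P}\cap U_{i-1}$ inside the ambient space $U_{i-1}$. It therefore suffices to treat $j=1$.

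For $j=1$, let $U$ be a hyperplane of $V$, assume $r\ge 1$, and let $H'$ be an arbitrary hyperplane of $U$; then $H'$ is a $(v-2)$-subspace of $V$ and $(\mathcal{P}\cap U)\cap H' = \mathcal{P}\cap H'$. The key geometric observation is that the hyperplanes of $V$ containing $H'$ correspond to the points of the quotient plane $V/H' \cong \F_q^2$, so there are exactly $q+1$ of them, say $H_0 = U, H_1, \dots, H_q$; all of them contain $H'$, while the points of $V$ outside $H'$ are partitioned among them, each lying in exactly one $H_i$. Counting the multiplicities of the points of $\mathcal{P}$ against this pencil then yields
\[
  \sum_{i=0}^{q} \#(\mathcal{P}\cap H_i) = \#\mathcal{P} + q\cdot\#(\mathcal{P}\cap H'),
\]
since a point in $H'$ is counted in all $q+1$ hyperplanes and a point outside $H'$ in exactly one.

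Now I would feed in the $q^r$-divisibility of $\mathcal{P}$: each summand on the left satisfies $\#(\mathcal{P}\cap H_i)\equiv \#\mathcal{P}\pmod{q^r}$, so the left-hand side is $\equiv (q+1)\#\mathcal{P} \pmod{q^r}$. Substituting into the identity and cancelling one copy of $\#\mathcal{P}$ gives $q\cdot\#(\mathcal{P}\cap H')\equiv q\cdot\#\mathcal{P}\pmod{q^r}$, whence $\#(\mathcal{P}\cap H')\equiv\#\mathcal{P}\pmod{q^{r-1}}$. Finally, since $U$ is itself a hyperplane of $V$, $q^r$-divisibility also gives $\#(\mathcal{P}\cap U)\equiv\#\mathcal{P}\pmod{q^r}$, and hence $\pmod{q^{r-1}}$ as well; combining the two congruences yields $\#(\mathcal{P}\cap H')\equiv\#(\mathcal{P}\cap U)\pmod{q^{r-1}}$, which is precisely the $q^{r-1}$-divisibility of $\mathcal{P}\cap U$ in $U$.

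The step I expect to require the most care is the cancellation of the factor $q$: one must check that $qa\equiv qb\pmod{q^r}$ legitimately reduces to $a\equiv b\pmod{q^{r-1}}$, which holds exactly because $r\ge 1$, i.e. $q^{r}=q\cdot q^{r-1}$. The hypothesis $j\le r$ in the statement is what guarantees that, along the chain, the divisibility exponent $r-i$ stays at least $1$ at every inductive step (and lands on $r-j\ge 0$ at the end), so each application of the $j=1$ case is justified.
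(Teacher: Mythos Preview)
Your proof is correct and follows essentially the same approach as the paper: reduce to codimension $j=1$ by induction along a flag, then use the pencil of $q+1$ hyperplanes through a codimension-two subspace together with the counting identity $\sum_i \#(\mathcal{P}\cap H_i) = \#\mathcal{P} + q\cdot\#(\mathcal{P}\cap H')$ and $q^r$-divisibility to cancel a factor of $q$. Your write-up is somewhat more explicit about the flag and the legitimacy of the cancellation, but the argument is the same.
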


\begin{proof}
  By induction, it suffices to consider the case $j=1$.
  Let $W$ be a hyperplane of $U$, that is a subspace of $V$ of codimension $2$.
  There are $q+1$ hyperplanes $H_1,\dots,H_{q+1}$ in $V$ containing $W$ ($U$ being one of them).
  From the $q^r$-divisibility of $\mathcal{P}$ we get
  \[
    (q+1)\card{\mathcal{P}}
    \equiv\sum_{i=1}^{q+1}\card{(\mathcal{P}\cap H_i)}
    =q\cdot\card{(\mathcal{P}\cap W)}+\card{\mathcal{P}}
    \pmod{q^r}\text{.}
  \]
  Hence $q\cdot\card{(\mathcal{P} \cap W)} \equiv q\cdot\card{\mathcal{P}} \equiv q\cdot\card{(\mathcal{P} \cap U)}\pmod{q^r}$
  and thus
  \[
  \card{(\mathcal{P} \cap W)}\equiv\card{(\mathcal{P}\cap U)}\pmod{q^{r-1}}\text{.}
  \]
\end{proof}

The restriction of a multiset of points to a hyperplane $H$ corresponds to the residual of a linear code in a codeword associated with $H$.
In the latter form, Lemma~\ref{lemma_heritable} is found in \cite[Lem.~13]{Ward-1998-JCTSA83[1]:79-93}.

We prepare one more lemma for the proof of Theorem~\ref{thm:characterization}, which guarantees the existence of a hyperplane containing not too many points of $\mathcal{P}$ by an averaging argument.

\begin{lemma}
  \label{lemma_average}
  Let $\mathcal{P}$ be a non-empty multiset of points.
  Then there exists a hyperplane $H$ with $\#(\mathcal{P}\cap H)< \frac{\#\mathcal{P}}{q}$. 
\end{lemma}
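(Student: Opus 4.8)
The plan is to use a straightforward averaging (double-counting) argument over the set of all hyperplanes of a fixed ambient space. First I would embed $\mathcal{P}$ into an ambient space $V$ of some finite dimension $v\ge 1$, which is possible because $\mathcal{P}$ is a finite non-empty multiset of points. The idea is then to compute the average of $\#(\mathcal{P}\cap H)$ as $H$ ranges over all hyperplanes of $V$, and to show that this average is strictly smaller than $\#\mathcal{P}/q$; since the minimum of a finite family of numbers never exceeds its average, this immediately produces a hyperplane with the desired property.

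For the key computation I would count incidences between the points of $\mathcal{P}$ and the hyperplanes of $V$ in two ways. Summing $\#(\mathcal{P}\cap H)$ over all hyperplanes $H$ gives
\[
  \sum_{H}\#(\mathcal{P}\cap H) = \sum_{P}\chi_{\mathcal{P}}(P)\cdot\#\{H : P\le H\} = \#\mathcal{P}\cdot\gauss{v-1}{1}{q},
\]
where I use that every point (a $1$-subspace) lies in exactly $\gauss{v-1}{1}{q}$ hyperplanes. Dividing by the total number $\gauss{v}{1}{q}$ of hyperplanes of $V$, the average value of $\#(\mathcal{P}\cap H)$ equals $\#\mathcal{P}\cdot\frac{q^{v-1}-1}{q^v-1}$.

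It then remains to compare this average with $\#\mathcal{P}/q$. Here I would verify the elementary inequality $\frac{q^{v-1}-1}{q^v-1}<\frac{1}{q}$, which follows from $q\bigl(q^{v-1}-1\bigr)=q^v-q<q^v-1$ since $q>1$. Consequently the average is strictly less than $\#\mathcal{P}/q$, and because $\mathcal{P}$ is non-empty (so $\#\mathcal{P}>0$) there is at least one hyperplane whose intersection number is at most the average, hence strictly below $\#\mathcal{P}/q$.

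I do not expect a serious obstacle: the argument is a clean pigeonhole over hyperplanes. The only points requiring a little care are the degenerate small-dimensional cases (for $v=1$ the unique hyperplane is $\{0\}$ and meets $\mathcal{P}$ in $0<\#\mathcal{P}/q$ points, consistent with $\gauss{0}{1}{q}=0$), and the observation that the strict inequality for the average transfers to at least one hyperplane precisely because the minimum never exceeds the mean. Note also that independence of the conclusion from the chosen ambient space is not needed, since we only assert the existence of one suitable hyperplane.
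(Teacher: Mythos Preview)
Your proposal is correct and follows essentially the same averaging argument as the paper: double-count point--hyperplane incidences to obtain the average $\#\mathcal{P}\cdot\gauss{v-1}{1}{q}/\gauss{v}{1}{q}$, observe that this is strictly less than $\#\mathcal{P}/q$, and pick a hyperplane realizing at most the average. The paper presents the strict inequality via $\gauss{v}{1}{q}=q\gauss{v-1}{1}{q}+1$ rather than your equivalent computation $q(q^{v-1}-1)<q^v-1$, but the two arguments are the same in substance.
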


\begin{proof}
  Let $V$ be a suitable ambient space of $\mathcal{P}$ of finite dimension $v$.
  Summing over all hyperplanes $H$ gives 
  $
    \sum_{H\in\gauss{V}{v-1}{q}} \#(\mathcal{P}\cap H) =\#\mathcal{P} \cdot \qnumb{v-1}{q}
  $,
  so that we obtain on average
  \[
  \frac{\#\mathcal{P} \cdot \qnumb{v-1}{q}}{\qnumb{v}{q}}
  = 
  \frac{\#\mathcal{P} \cdot \qnumb{v-1}{q}}{q \qnumb{v-1}{q} + 1}
  = \#\mathcal{P} \cdot \frac{1}{q+\frac{1}{\qnumb{v-1}{q}}}
  <\frac{\#\mathcal{P}}{q}
  \]
  points of $\mathcal{P}$ per hyperplane.
  Choosing a hyperplane $H$ that minimizes $\#(\mathcal{P}\cap H)$ completes the proof.
\end{proof}

The coding counterpart of Lemma~\ref{lemma_average} is the well-known existence of a codeword of weight $> \frac{q-1}{q} n_{\operatorname{eff}}$, where $n_{\operatorname{eff}}$ denotes the effective length of $C$.

Now we investigate the sizes of $q^r$-divisible multisets.
For fixed $q$ and $r$, an integer $n$ will be called \emph{realizable} if there exists a $q^r$-divisible multiset of points of size $n$.

\begin{lemma}
	\label{lemma:snumb_card}
	For each $r\in\mathbb{N}_0$ and each $i\in\{0,\ldots,r\}$ there is a $q^r$-divisible multiset of points of cardinality 
	$\snumb{r}{i}{q}$.\footnote{The numbers $\snumb{r}{i}{q} = q^i\cdot\qnumb{r-i+1}{q}$ have been defined in the Introduction.}
\end{lemma}

\begin{proof}
	A suitable multiset of points is given by the $q^i$-fold repetition of an $(r-i+1)$-subspace.
\end{proof}

\begin{lemma}
  \label{lemma_sum}
  The set of sizes of $q^r$-divisible multisets of points is closed under addition.
\end{lemma}

\begin{proof}
Assume that the integers $n_1$ and $n_2$ are realizable.
Then there exist $q^r$-divisible multisets $\mathcal{P}_1$ and $\mathcal{P}_2$ of sizes $n_1$ and $n_2$, respectively.
Let $V_1$ and $V_2$ be the respective ambient spaces.
By Lemma~\ref{lem:ambient_space_unwichtig}, the embeddings of $\mathcal{P}_1$ and $\mathcal{P}_2$ in $V_1 \times V_2$ are $q^r$-divisible.
Now by Lemma~\ref{lem:qr-div-basic}\ref{lem:qr-div-basic:subspace}, their multiset union is a $q^r$-divisible multiset of cardinality $n_1 + n_2$.
\end{proof}

As a consequence of the last two lemmas, all $n = \sum_{i=0}^r a_i \snumb{r}{i}{q}$ with $a_i\in\mathbb{N}_0$ are realizable cardinalities of $q^r$-divisible multisets of points.
As $\snumb{r}{r}{q} = q^r$ and $\snumb{r}{0}{q} = 1 + q + q^2 + \ldots + q^r$ are coprime, for fixed $q$ and $r$ there is only a finite set of cardinalities which is not realizable as a $q^r$-divisible multiset.

Our goal is to show Theorem~\ref{thm:characterization}, which says that actually all possible cardinalities are of the above form.

%

\section{The $S_q(r)$-adic expansion}
\label{sect:sqradic}

We are going to show that each integer $n$ has a unique $S_q(r)$-adic expansion as defined in Equation~\eqref{eq:sqadic}, that is
\[
	n = \sum_{i=0}^r a_i \snumb{r}{i}{q}
\]
with $a_0,\ldots,a_{r-1}\in\{0,\ldots,q-1\}$ and $a_r\in\mathbb{Z}$.
The idea is to consider Equation~\eqref{eq:sqadic} modulo $q, q^2,\ldots,q^r$ which gradually determines $a_0, a_1,\ldots,a_{r-1}\in\{0,\ldots,q-1\}$, using that $\snumb{r}{i}{q}$ is divisible by $q^i$, but not by $q^{i+1}$.

For the existence part, we give an algorithm that computes the $S_q(r)$-adic expansion.

\begin{algorithm} \textbf{Algorithm 1} \\
\label{alg:1}
\KwData{$n\in\mathbb{Z}$, field size $q$, exponent $r$}
\KwResult{representation $n=\sum_{i=0}^r a_i \snumb{r}{i}{q}$ with $a_0,\ldots,a_{r-1}\in\{0,\ldots,q-1\}$ and $a_r\in\mathbb{Z}$}
$m\gets n$\\
\For{$i\gets 0$ \KwTo $r-1$}
{
  $a_i\gets m\bmod q$\\
  $m\gets \frac{m-a_i\cdot\qnumb{r-i+1}{q}}{q}$\\
}
$a_r\gets m$
\end{algorithm}

\begin{lemma}
\label{lem:unique_representation}
Let $n\in\mathbb{Z}$ and $r\in\mathbb{N}_0$.
Algorithm~\ref{alg:1} computes the unique $S_q(r)$-adic expansion of $n$.
\end{lemma}

\begin{proof}
First, we check that Algorithm~\ref{alg:1} computes indeed an $S_q(r)$-adic expansion of $n$. 
Note that in the $i$-th loop run ($i\in\{0,\ldots,r-1\})$ after the execution of \enquote{$a_i\gets m\bmod q$} we have $m \equiv a_i \pmod q$, 
so that the updated value of $m$ in the subsequent line is always an integer, and thus $a_r\in\mathbb{Z}$ at the end of the algorithm.
The line \enquote{$a_i\gets m\bmod q$} provides $a_i\in\{0,\ldots,q-1\}$ for all $i\in\{0,\ldots,r-1\}$.
After the $i$-th loop run, we have $n = q^{i+1} m + \sum_{j=0}^i q^j \qnumb{r-j+1}{q}$, which one shows by induction.
Therefore, at the end of the algorithm
\[
    n = q^r a_r + \sum_{j=0}^{r-1} q^j \qnumb{r-j+1}{q} = \sum_{j=0}^r a_j \snumb{r}{j}{q}\text{.}
\]

For uniqueness, assume that there is a different representation $n=\sum_{i=0}^r b_i \snumb{r}{i}{q}$ with $b_0,\ldots,b_{r-1}\in\{0,\ldots,q-1\}$ and $b_r\in\mathbb{Z}$.
Let $t$ be the smallest index $i$ with $a_i\neq b_i$.
Then $\sum_{i=0}^{t-1} a_i \snumb{r}{i}{q} = \sum_{i=0}^{t-1} b_i \snumb{r}{i}{q}$ and thus
\[
	\underbrace{(a_t - b_t)}_{\neq 0} \snumb{r}{t}{q} = \sum_{i=t+1}^{r} (b_i - a_i) \snumb{r}{i}{q}\text{.}
\]
As $\snumb{r}{i}{q}$ is divisible by $q^i$ but not by $q^{i+1}$, the right hand side is divisible by $q^{t+1}$, but the left hand side is not, which is a contradiction.
\end{proof}

\begin{definition}
	Let $n\in\mathbb{Z}$ and $n = \sum_{i=0}^r a_i \snumb{r}{i}{q}$ be its unique $S_q(r)$-adic expansion.
	The number $a_r$ will be called the \emph{leading coefficient} and the number $\sigma = \sum_{i=0}^r a_i$ will be called the \emph{cross sum} of the $S_q(r)$-adic expansion of $n$.
\end{definition}

\begin{example}
	\label{ex:q3r3n137}
	For $q=3$, $r = 3$, we have $S_3(3) = (40, 39, 36, 27)$.
	For $n = 137$, Algorithm~1 computes
	\begin{align*}
		m & \leftarrow 137\text{,}\\
		a_0 & \leftarrow 137\bmod 3 = 2\text{,}\\
		m & \leftarrow \left(137 - 2\cdot \qnumb{4}{3}\right) / 3 = (137 - 2\cdot 40)/3 = 19\text{,}\\
		a_1 & \leftarrow 19\bmod 3 = 1\text{,}\\
		m & \leftarrow \left(19 - 1\cdot \qnumb{3}{3}\right) / 3 = (19 - 1\cdot 13)/3 = 2\text{,}\\
		a_2 & \leftarrow 2 \bmod 3 = 2\text{,}\\
		m & \leftarrow \left(2 - 2\cdot \qnumb{2}{3}\right) / 3 = (2 - 2\cdot 4)/3 = -2\text{,}\\
		a_3 & \leftarrow -2\text{.}
	\end{align*}
	Therefore, the $S_3(3)$-adic expansion of $137$ is
	\[
		137 = 2\cdot 40 + 1\cdot 39 + 2\cdot 36 + (-2)\cdot 27\text{.}
	\]
	The leading coefficient is $a_3 = -2$, and the cross sum is $2 + 1 + 2 + (-2) = 3$.
\end{example}

\section{Proof of the main theorem}
\label{sect:main}

\begin{proof}[{{Proof of Theorem~\ref{thm:characterization}}}]
We are going to show the geometric version of the theorem.
That is, we replace statement~\ref{thm:characterization:card} by the geometric counterpart \enquote{There exists a $q^r$-divisible multiset of points over $\F_q$ of size $n$}.

The implication \enquote{$\text{\ref{thm:characterization:n_strong}} \Rightarrow\text{\ref{thm:characterization:card}}$} follows from Lemma~\ref{lemma:snumb_card} and Lemma~\ref{lemma_sum}.

The main part of the proof is the verification of \enquote{$\text{\ref{thm:characterization:card}} \Rightarrow\text{\ref{thm:characterization:n_strong}}$}.
The statement is clear for $r = 0$ or $n \leq 0$, so we may assume $r \geq 1$ and $n \geq 1$.

Let $\mathcal{P}$ be a $q^r$-divisible multiset of points of size $n = \#\mathcal{P} \geq 1$.
Let $n=\sum_{i=0}^r a_i \snumb{r}{i}{q}$ with  $a_0,\ldots,a_{r-1}\in\{0,1,\dots,q-1\}$ and $a_r\in\mathbb{Z}$ be the $S_q(r)$-adic expansion of $n$ (see Lemma~\ref{lem:unique_representation}) and $\sigma = \sum_{i=0}^r a_i$ its cross sum.

Let $H$ be a hyperplane in $V$ and $m = \#(\mathcal{P} \cap H)$.
By the $q^r$-divisibility of $\mathcal{P}$ we have $n - m = \tau q^r $ with $\tau\in\mathbb{Z}$.
Using $\snumb{r}{i}{q} = \snumb{r-1}{i}{q} + q^r$, we get
\begin{align}
	\notag m & = n - \tau q^r
	= \sum_{i=0}^{r-1} a_i (\snumb{r-1}{i}{q} + q^r) + a_r q^r -\tau q^r  \\
	\label{eq:m_compact} & = \sum_{i=0}^{r-1} a_i \snumb{r-1}{i}{q} + (\sigma - \tau) q^r \\
	\label{eq:m_unique} & = \sum_{i=0}^{r-2} a_i \snumb{r-1}{i}{q} + (a_{r-1} + q(\sigma - \tau)) q^{r-1}\text{.}
\end{align}
%
%
By Lemma~\ref{lemma_heritable}, $\mathcal{P} \cap H$ is a $q^{r-1}$-divisible multiset of size $m$, and line~\eqref{eq:m_unique} is the $S_q(r-1)$-adic expansion of $m$.
Hence by induction over $r$, we get that $a_{r-1} + q(\sigma - \tau) \geq 0$.
So $q(\sigma - \tau) \geq -a_{r-1} > -q$, implying that $\sigma - \tau > -1$ and thus $\sigma \geq \tau$.

By Lemma~\ref{lemma_average}, we may choose $H$ such that $m < \frac{n}{q}$.
Thus, using the expression for $m$ from line~\eqref{eq:m_compact} together with $q\snumb{r-1}{i}{q} = \snumb{r}{i+1}{q}$ and $\snumb{r}{i}{q} - \snumb{r}{i+1}{q} = q^i$, we get
\begin{align*}
	0
	& < n - qm
	= \sum_{i=0}^{r} a_i \snumb{r}{i}{q} - \sum_{i=0}^{r-1} a_i \snumb{r}{i+1}{q} - (\sigma - \tau) q^{r+1} \\
	& = \sum_{i=0}^{r-1} a_i q^i + a_r q^r - (\sigma - \tau) q^{r+1}
	\leq \sum_{i=0}^{r-1} (q-1) q^i+ a_r q^r 
	= (q^r - 1) + a_r q^r 
	< (1 + a_r) q^r\text{.}
\end{align*}
Therefore $1 + a_r > 0$ and finally $a_r \geq 0$.
\end{proof}

\begin{remark}
	By Theorem~\ref{thm:characterization}, the $S_q(r)$-adic expansion of $n$ provides a certificate not only for the existence, but remarkably also for the non-existence of a $q^r$-divisible multiset of size $n$.

	For instance, the $S_3(3)$-adic expansion $137 = 2\cdot 40 + 1\cdot 39 + 2\cdot 36 + (-2)\cdot 27$ with leading coefficient $-2$ from Example~\ref{ex:q3r3n137} implies immediately that there is no $27$-divisible ternary linear code of effective length $137$.
\end{remark}

\begin{remark}
The proof of Theorem~\ref{thm:characterization} uses the $q^r$-divisibility of $\mathcal{P}$ only in two places:
For the hyperplane $H$ containing less than the average number of points, and for invoking Lemma~\ref{lemma_heritable}, telling us that the restriction of $\mathcal{P}$ to this hyperplane $H$ is $q^{r-1}$-divisible.
Restricting the requirements to what was actually needed in the proof, let us call a multiset $\mathcal{P}$ of points \emph{weakly $q^r$-divisible} if $r = 0$ or if there is a hyperplane $H$ such that $\#(\mathcal{P}\cap H)<\frac{\#\mathcal{P}}{q}$ and $\#\mathcal{P}\equiv \#(\mathcal{P}\cap H)\pmod {q^r}$ and $\mathcal{P}\cap H$ is weakly $q^{r-1}$-divisible.
The statement of Theorem~\ref{thm:characterization} is still true for weakly $q^r$-divisible multisets of points.

There are many more weakly $q^r$-divisible multisets of points than $q^r$-divisible ones.
As an example, any multiset $\mathcal{P}$ of points of size $\#\mathcal{P} = q$ in the projective line $\PG(\F_q^2)$ is weakly $q$-divisible:
Since $\qnumb{2}{q} = q+1 > q$, the projective line contains a point $P$ not contained in $\mathcal{P}$ which provides a suitable hyperplane $H$ for the definition.
The only $q$-divisible multiset of this type is a single point of multiplicity $q$.
\end{remark}

\begin{proof}[{{Proof of Theorem~\ref{thm:max_weight}}}]
	The above proof shows that if $\mathcal{P}$ is a non-empty $q^r$-divisible multiset of size $n$ and $\sigma$ is the cross sum of the $S_q(r)$-adic expansion of $n$, we have $\#\mathcal{P} - \#(\mathcal{P} \cap H) = \tau q^r$ with $\tau \leq \sigma$ for every hyperplane $H$.
	In other words, the maximum weight of a full-length $q^r$-divisible linear code of length $n$ over $\F_q$ is at most $\sigma q^r$.
\end{proof}

\begin{example}
The $S_2(3)$-adic expansion of $n = 59$ is $1\cdot 15 + 0\cdot 14 + 1\cdot 12 + 4\cdot 8$, with cross sum $\sigma = 1 + 0 + 1 + 4 = 6$.
Therefore by Theorem~\ref{thm:max_weight}, the codewords of an $8$-divisible code of effective length $59$ are of weight at most $6\cdot 8 = 48$.
This reasoning is the first step in the proof that there is no projective $8$-divisible binary linear code of length $59$ in~\cite{arXiv1812_05957}.
\end{example}

\begin{example}
In algebraic geometry, a \emph{nodal surface} is a surface in the complex projective space whose only singularities are nodes.
An old problem asks for the maximum number $\mu(s)$ of nodes a nodal surface of given degree $s$ can have \cite{Basset-1906-Nature73:246}.
This problem has been solved only for $s \leq 6$.
The answer in the largest settled case is $\mu(6) = 65$.
The lower bound $\mu(6) \geq 65$ is realized by Barth's sextic \cite{Barth-1996-JAlgebraicGeom5[1]:173-186} and the sextics in the $3$-parameter series in \cite[Th.~5.5.9]{Pettersen-1998-Thesis}.

For the upper bound $\mu(6) \leq 65$, coding theoretic arguments have been used.
Each nodal surface comes with its \emph{even sets of nodes}, which are the codewords of a certain binary linear code $C$ assigned to the nodal surface.
The length $n$ of $C$ is the number of nodes, and $C$ is known to be $4$-divisible if $s$ is odd and $8$-divisible if $s$ is even.
In the case $s = 6$, we have $\dim(C) \geq n-53$, and the nonzero weights of the $8$-divisible code $C$ are contained in $\{24,32,40,56\}$ \cite{Catanese-Tonoli-2007-JEurMathSocJEMS9[4]:705-737}.
For $n = 66$, we get $\dim(C) \geq 13$, which has been shown to be impossible~\cite{Jaffe-Ruberman-1997-JAlgebraicGeom6[1]:151-168}.

It is an open problem to classify the codes $C$ which arise from a nodal sextic having the record number $65$ of nodes.
The $S_2(3)$-adic expansion of $65$ is $1\cdot 15 + 1 \cdot 14 + 1\cdot 12 + 3\cdot 8$ with cross sum $\sigma = 1 + 1 + 1 + 3 = 6$.
If $C$ is full-length, by Theorem~\ref{thm:max_weight} the weights in $C$ are at most $6\cdot 8 = 48$.
So in this case, weight $56$ is not possible and hence all nonzero weights of $C$ are contained in $\{24,32,40\}$.
\end{example}

In analogy to the \emph{Frobenius Coin Problem}, cf.~\cite{brauer1942problem},
we define $\frobenius{r}{q}$ as the smallest integer such that
a $q^r$-divisible multiset of cardinality $n$ exists for all integers
$n>\frobenius{r}{q}$. 
In other words, $\frobenius{r}{q}$ is the largest integer which is not realizable as the size of a $q^r$-divisible multiset of points over $\F_q$.
If all non-negative integers are realizable then $\frobenius{r}{q} = -1$, which is the case for $r = 0$.

\begin{proposition}
  \label{prop:frobenius}
  For every prime power $q$ and $r\in\mathbb{N}_0$ we have
  \[
	  \frobenius{r}{q}= r\cdot q^{r+1} - \qnumb{r+1}{q} = rq^{r+1} - q^r - q^{r-1} - \ldots - 1\text{.}
  \]
\end{proposition}

\begin{proof}
	By Theorem~\ref{thm:characterization}, $\frobenius{r}{q}$ is the largest integer $n$ whose $S_q(r)$-adic expansion $n = \sum_{i=0}^{r-1}a_i\snumb{r}{i}{q} + a_r q^r$ has leading coefficient $a_r < 0$.
	Clearly, this $n$ is given by $a_0 = \ldots = a_{r-1} = q-1$ and $a_r = -1$, such that
	\begin{multline*}
		\frobenius{r}{q}
		= \sum_{i=0}^{r-1} (q-1)\snumb{r}{i}{q} - q^r
		= \sum_{i=0}^{r-1} (q^{r+1} - q^i) - q^r \\
		= rq^{r+1} - \frac{q^r - 1}{q-1} - q^r
		= rq^{r+1} - \frac{q^{r+1}-1}{q-1}\text{.}
	\end{multline*}
\end{proof}

\section{Sharpened rounding}
\label{sect:rounding}

As a preparation for the applications in Galois geometries, we introduce the following notions of sharpened rounding, which are based on the existence of certain divisible codes.

\begin{definition}
	\label{def:divisible_gauss_bracket}
  For $a\in\mathbb{Z}$ and $b\in\mathbb{Z}\setminus\{0\}$ let $\llfloor a/b \rrfloor_{q^r}$ be the maximal $n\in\mathbb{Z}$ such that there exists a $q^r$-divisible $\F_q$-linear code of effective length $a-nb$.
  If no such code exists for any $n$, we set $\llfloor a/b \rrfloor_{q^r} = -\infty$.
  Similarly, let $\llceil a/b\rrceil_{q^r}$ denote the minimal $n\in\mathbb{Z}$ such that there exists a $q^r$-divisible $\F_q$-linear code of effective length $nb-a$. 
  If no such code exists for any $n$, we set $\llceil a/b\rrceil_{q^r} = \infty$
\end{definition}

\begin{remark}
	\label{rem:divisible_gauss_bracket}
	\begin{enumerate}[(a)]
		\item\label{rem:divisible_gauss_bracket:formal}
		Note that the symbols $\llfloor a/b \rrfloor_{q^r}$ and $\llceil a/b \rrceil_{q^r}$ encode the four values $a$, $b$, $q$ and $r$. Thus, the fraction $a/b$ is a formal fraction, and the power $q^r$ is a formal power.
		\item We have
		\[
		    \llfloor 0/b\rrfloor_{q^r} = \llceil 0/b\rrceil_{q^r} = 0
		\]
		and
		\begin{multline*}
		\ldots \leq \llfloor a/b\rrfloor_{q^2} \leq \llfloor a/b\rrfloor_{q^1} \leq \llfloor a/b \rrfloor_{q^0} = \lfloor a/b \rfloor \\
		    \leq a/b \leq \lceil a/b\rceil = \llceil a/b \rrceil_{q^0} \leq \llceil a/b\rrceil_{q^1} \leq \llceil a/b\rrceil_{q^2} \leq \ldots
		\end{multline*}
	\end{enumerate}
\end{remark}

\begin{lemma}
	\label{lem:sharpened_rounding:delta}
	Let $a\in \mathbb{Z}$ and $b \in \mathbb{Z}_{\geq 0}$.
	Then $\lfloor a/b\rfloor - \llfloor a/b\rrfloor_{q^r} \leq \lceil\frac{\frobenius{r}{q} + 1}{b}\rceil$ and $\llceil a/b \rrceil_{q^r} - \lceil a/b\rceil \leq \lceil\frac{\frobenius{r}{q} + 1}{b}\rceil$.
\end{lemma}

\begin{proof}
	By Proposition~\ref{prop:frobenius}, there exists a $q^r$-divisible $\F_q$-linear code of effective length $a - nb$ for all $n\in\mathbb{Z}$ with $a - nb \geq F_q(r) + 1$ or equivalently $n \leq \frac{a - (F_q(r) + 1)}{b}$.
	Therefore, $\llfloor a/b\rrfloor_{q^r} \geq \lfloor\frac{a - (F_q(r) + 1)}{b}\rfloor$ and $\lfloor a/b\rfloor - \llfloor a/b\rrfloor_{q^r} \leq \lceil\frac{F_q(r) + 1}{b}\rceil$.
	The second inequality is shown similarly.
\end{proof}

\begin{remark}
	\label{rem:sharpened_rounding:computation}
	For $a\in\mathbb{Z}$ and $b\in\mathbb{Z}_{\geq 1}$, Theorem~\ref{thm:characterization} and Lemma~\ref{lem:sharpened_rounding:delta} suggest the following method for the computation of $\llfloor a/b\rrfloor_{q^k}$:
	For all $n\in\{\lfloor a/b\rfloor-\lceil\frac{\frobenius{r}{q} + 1}{b}\rceil,\ldots,\lfloor a/b\rfloor\}$, use Algorithm~\ref{alg:1} to compute the leading coefficient of the $S_q(r)$-adic expansion of $a - nb$.
	By definition, $\llfloor a/b\rrfloor_{q^k}$ is the largest of these $n$ whose leading coefficient is non-negative.
	Similarly, $\llceil a/b\rrceil_{q^k}$ is the smallest $n\in\{\lceil a/b\rceil,\ldots,\lceil a/b\rceil+\lceil\frac{\frobenius{r}{q} + 1}{b}\rceil\}$ such that the leading coefficient of the $S_q(r)$-adic expansion of $nb - a$ is positive.
\end{remark}

\begin{lemma}
	\label{lem:sharpened_rounding_unique}
	Let $a\in\mathbb{Z}$ and $b\in\mathbb{Z}_{\geq 1}$ such that there exists a $q^r$-divisible $\F_q$-linear code of effective length $b$.
	Then $\llfloor a/b\rrfloor_{q^k}$ is the unique $n\in\mathbb{Z}$ with the property that there exists a $\F_q$-linear code of effective length $a - nb$, but none of effective length $a - (n+1)b$.
	Similarly, $\llceil a/b\rrceil_{q^k}$ is the unique $n\in\mathbb{Z}$ with the property that there exists a $\F_q$-linear code of effective length $nb - a$, but none of effective length $(n-1)b - a$.
\end{lemma}

\begin{proof}
	By Lemma~\ref{lemma_sum}, the existence of a $q^r$-divisible multiset of points of size $a - nb$ implies the existence of $q^r$-divisible multisets of all sizes $a - mb = (a - nb) + (n - m)b$ with integers $m \leq n$.
	This implies the claim for $\llfloor a/b\rrfloor_{q^k}$.
	The complementary statement for $\llceil a/b\rrceil_{q^k}$ is done analogously.
\end{proof}

\begin{remark}
	\label{rem:sharpened_rounding:computation_improved}
	Lemma~\ref{lem:sharpened_rounding_unique} allows a significant speed-up of the computation strategy for $\llfloor a/b\rrfloor_{q^k}$ discussed in Remark~\ref{rem:sharpened_rounding:computation}:
	Now, a binary search algorithm may be used to find the unique $n$ in the interval $\{\lfloor a/b\rfloor-\lceil\frac{\frobenius{r}{q} + 1}{b}\rceil,\ldots,\lfloor a/b\rfloor\}$ such that the $S_q(r)$-adic expansion of $a-nb$ has a non-negative leading coefficient, but $a - (n+1)b$ has a negative one.
	Thus, the number of needed computations of $S_q(r)$-adic expansions gets logarithmized.
	Again, $\llceil a/b\rrceil_{q^k}$ can be treated similarly.
	
	We leave it as an open problem to study further improvements for the computation of $\llfloor a/b\rrfloor_{q^k}$ and $\llceil a/b\rrceil_{q^k}$.
\end{remark}

\section{Application of divisible codes in Galois geometry}
\label{sect:application}

The connection between divisible codes and Galois geometries is based on the following lemmas.

\begin{lemma}
\label{lem:union_subspaces}
  Let $\mathcal{U}$ be a multiset of subspaces of $V$ and $\mathcal{P} = \uplus_{U\in\mathcal{U}} \gauss{U}{1}{q}$ the \emph{associated multiset 
  of points}.\footnote{In the expression $\biguplus_{U\in\mathcal{U}}$, the subspace $U$ is repeated according to its multiplicity in the multiset $\mathcal{U}$.}
  Let $k$ be the smallest dimension among the subspaces in $\mathcal{U}$.
  If $k \geq 1$, then the multiset $\mathcal{P}$ is $q^{k-1}$-divisible.
\end{lemma} 

\begin{proof}
	Apply Lemma~\ref{lem:qr-div-basic}\ref{lem:qr-div-basic:subspace} and~\ref{lem:qr-div-basic:union}.
\end{proof}

We would like to point out the following important special case of Lemma~\ref{lem:union_subspaces}.
\begin{lemma}
  Let $k\in\mathbb{Z}_{\geq 1}$ and $\mathcal{U} \subseteq \gauss{V}{k}{q}$.
  Then the associated multiset $\biguplus_{U\in\mathcal{U}} \gauss{U}{1}{q}$ of points is $q^{k-1}$-divisible.
\end{lemma} 

\begin{lemma}
  \label{lem:pack_cover}
  Let $k \in \mathbb{Z}_{\geq 1}$ and $\mathcal{U}$ be a multiset of subspaces in $V$ of dimension $\geq k$.
  \begin{enumerate}[(i)]
    \item\label{lem:pack_cover:pack} If every point of $\PG(V)$ is covered by at most $\lambda$ elements of $\mathcal{U}$, then 
    \[
	\#\mathcal{U}\le \llfloor\lambda\cdot\qnumb{v}{q}/\qnumb{k}{q}\rrfloor_{q^{k-1}}\text{.}
    \]
    \item\label{lem:pack_cover:cover} If every point of $\PG(V)$ is covered by at least $\lambda$ elements in $\mathcal{U}$, then 
    \[
	\#\mathcal{U}\ge \llceil\lambda\cdot\qnumb{v}{q}/\qnumb{k}{q}\rrceil_{q^{k-1}}\text{.}
    \]
  \end{enumerate}  
\end{lemma}

\begin{proof}
\begin{enumerate}
	By Lemma~\ref{lem:union_subspaces}, the associated multiset $\mathcal{P} = \uplus_{U\in\mathcal{U}} \gauss{U}{1}{q}$ of points is $q^{k-1}$-divisible.
	Part~\ref{lem:pack_cover:pack}:
	Let $\bar{\mathcal{P}}$ be the $\lambda$-complementary multiset as in Lemma~\ref{lemma_t_complement}. Then 
             $\#\bar{\mathcal{P}} = \lambda\cdot\qnumb{v}{q} - \#\mathcal{U}\cdot \qnumb{k}{q}$ and by Lemma~\ref{lem:union_subspaces} and Lemma~\ref{lemma_t_complement}, $\bar{\mathcal{P}}$ is $q^{k-1}$-divisible.

  Part~\ref{lem:pack_cover:cover}:
  Let $\mathcal{P}'$ arise from $\mathcal{P}$ by reducing the multiplicity of every point by $\lambda$, in characteristic functions $\chi_{\mathcal{P}'} = \chi_{\mathcal{P}} - \lambda \chi_{\gauss{V}{1}{q}}$.
  By Lemma~\ref{lem:qr-div-basic}\ref{lem:qr-div-basic:subspace}, $\gauss{V}{1}{q}$ is $q^{v-1}$-divisible, and by $k \leq v$, it is $q^{k-1}$-divisible.
  So $\mathcal{P}'$ is $q^{k-1}$-divisible of size $\#\mathcal{U}\cdot\qnumb{k}{q}-\lambda\cdot\qnumb{v}{q}$.             
\end{enumerate}             
\end{proof}

\begin{remark}
\label{rem:pack_cover:computation}
By Lemma~\ref{lem:qr-div-basic}\ref{lem:qr-div-basic:subspace}, there is a $q^{k-1}$-divisible multiset of points of size $\qnumb{k}{q}$, which is the denominator in the expressions $\llfloor\lambda\cdot\qnumb{v}{q}/\qnumb{k}{q}\rrfloor_{q^{k-1}}$ and $\llceil\lambda\cdot\qnumb{v}{q}/\qnumb{k}{q}\rrceil_{q^{k-1}}$ in Lemma~\ref{lem:pack_cover}.
Thus, the improved computation method of Remark~\ref{rem:sharpened_rounding:computation_improved} can be used for the evaluation.
\end{remark}

\begin{remark}
\label{rem:pack_cover:improved}
The divisible point sets in the proof of Lemma~\ref{lem:pack_cover} have the additional property that they exist in the ambient space $V$ of dimension $v$.
This dimension property does not give an improvement of Lemma~\ref{lem:pack_cover}, as by Theorem~\ref{thm:characterization}, all sizes $n$ of $q^{r-1}$-divisible multisets of points are a sum of numbers $\snumb{k-1}{i}{q}$ ($i\in\{0,\ldots,k-1\}$), and by the construction in the proof of Lemma~\ref{lemma:snumb_card}, there always exists a suitable multiset of points in dimension $k \leq v$.

However, in part~\ref{lem:pack_cover:pack} we have the additional property that the maximum point multiplicity is bounded by $\lambda$.
Thus, Lemma~\ref{lem:pack_cover}\ref{lem:pack_cover:pack} could possibly be sharpened by restricting the existence question in Definition~\ref{def:divisible_gauss_bracket} to codes with dimension $\leq v$ and maximal point multiplicity at most $\lambda$.
However, the resulting bounds might be much harder to evaluate than those stated in Lemma~\ref{lem:pack_cover} (see Remark~\ref{rem:pack_cover:computation}).
\end{remark}

\subsection{Upper bounds on the maximum size of partial spreads}
\label{subsect:nastase_sissokho}
Let $V$ be a $v$-dimensional vector space over $\F_q$ and $k\in\{1,\ldots,v\}$.
A \emph{partial $(k-1)$-spread} $\mathcal{S}$ in $\PG(V)$ is a set of $k$-subspaces with pairwise trivial intersection.
In other words, each point is covered by at most one element of $\mathcal{S}$.
The maximum size of a partial $(k-1)$-spread will be denoted by $A_q(v,2k;k)$.\footnote{Partial spreads are special cases of constant 
dimension subspace codes, and the symbol $A_q(v,2k;k)$ matches the notation in that more general setting.}

From our preliminary considerations in this section, we get:
\begin{lemma}
\label{lem:partial_spread_generic_ub}
$A_q(v,2k;k) \leq \llfloor \qnumb{v}{q} / \qnumb{k}{q} \rrfloor_{q^{k-1}}$.
\end{lemma}

\begin{proof}
Apply Lemma~\ref{lem:pack_cover}\ref{lem:pack_cover:pack} with $\lambda = 1$.
\end{proof}

The points which remain uncovered by a partial $(k-1)$-spread $\mathcal{S}$ are called \emph{holes} of $\mathcal{S}$.
The set of holes is precisely the $1$-complementary point set in the proof of Lemma~\ref{lem:pack_cover}\ref{lem:pack_cover:pack}.

\begin{lemma}[{{\cite[Theorem 8(ii)]{honold2016partial}}}]
Let $\mathcal{S}$ be a partial $(k-1)$-spread.
Its set of holes is $q^{k-1}$-divisible.
\end{lemma}

\begin{proof}
The set of holes is the $1$-complementary point set of $\bigcup_{B \in \mathcal{S}} \gauss{B}{1}{q}$, which is $q^{k-1}$-divisible by Lemma~\ref{lem:union_subspaces} and Corollary~\ref{lemma_t_complement}.
\end{proof}

Using the properties of the set of holes, we get the following improvement of Lemma~\ref{lem:partial_spread_generic_ub} along the lines of Remark~\ref{rem:pack_cover:improved}.

\begin{lemma}
\label{lem:partial_spread_generic_ub_improved}
Let $n$ be the largest integer such that there exists a projective $q^{k-1}$-divisible $\F_q$-linear code of dimension $\leq v$ and length $\qnumb{v}{q} - n\qnumb{k}{q}$.
Then $A_q(v,2k;k) \leq n$.
\end{lemma}

\begin{proof}
	The set of holes of a partial $(k-1)$-spread $\mathcal{S}$ is a $q^{k-1}$-divisible set of points in $\PG(V)$ of size $\qnumb{v}{q} - \#\mathcal{S}\cdot \qnumb{k}{q}$.
\end{proof}

\begin{remark}
	\begin{enumerate}[(a)]
		\item Lemma~\ref{lem:partial_spread_generic_ub_improved} is strictly stronger than Lemma~\ref{lem:partial_spread_generic_ub}: We have $\llfloor\qnumb{11}{2} / \qnumb{4}{2}\rrfloor_{2^3} = \llfloor 2047 / 15\rrfloor_{2^3} = 135$ as there is a $2^3$-divisible binary code of effective 
		length $2047 - 135\cdot 15 = 22$, but none of effective length $2047 - 136\cdot 15 = 7$.\footnote{Use Lemma~\ref{lem:sharpened_rounding_unique} with the $S_2(3)$-adic expansions $22 = 0\cdot 15 + 1\cdot 14 + 0\cdot 12 + \underline{1}\cdot 8$ with leading coefficient $1 \geq 0$ and $7 = 1\cdot 15 + 0\cdot 14 + 0\cdot 12 + (\underline{-1})\cdot 8$ with leading coefficient $-1 < 0$.}
		However, there are no \emph{projective} $2^3$-divisible binary codes of effective lengths $2047 - 135\cdot 15 = 22$, $2047 - 134\cdot 15 = 37$ and $2047 - 133\cdot 15 = 52$, but there is such a code of length $2047 - 132\cdot 15 = 67$, see~\cite{arXiv1812_05957}.
		Thus, Lemma~\ref{lem:partial_spread_generic_ub} yields $A_2(11,2\cdot 4;4) \leq 135$ and Lemma~\ref{lem:partial_spread_generic_ub_improved} yields $A_2(11,2\cdot 4;4) \leq 132$, the latter being the best 
		known upper bound on $A_2(11,2\cdot 4;4)$.\footnote{The best known bounds are $129 \leq A_2(11,2\cdot 4;4) \leq 132$.}
		\item While Lemma~\ref{lem:partial_spread_generic_ub} can be evaluated based on computing $S_q(k-1)$-adic expansions as suggested in Remark~\ref{rem:sharpened_rounding:computation_improved}, no effective way is known to evaluate Lemma~\ref{lem:partial_spread_generic_ub_improved}.
		Still, Lemma~\ref{lem:partial_spread_generic_ub} is enough to settle a wide range of parameters of partial spreads, see Corollary~\ref{cor:nastase_sissokho}.
		\item Unfortunately, we don't know a closed formula for the evaluation of $\llfloor \qnumb{v}{q} / \qnumb{k}{q} \rrfloor_{q^{k-1}}$ in Lemma~\ref{lem:partial_spread_generic_ub}. 
		For the parameters not covered by Corollary~\ref{cor:nastase_sissokho}, Corollary~\ref{cor:partial_spread_ub} will give an explicit (though somewhat weaker than Lemma~\ref{lem:partial_spread_generic_ub}) upper bound.
		The approach will be similar to the one in the proof of Corollary~\ref{cor:nastase_sissokho}.
	\end{enumerate}
\end{remark}

For $k \mid v$, it is possible to cover all the points by the existence of spreads and thus $A_q(v,2k;k) = \frac{q^v - 1}{q^k - 1}$.
The more involved situation is $k \nmid v$ where no spread exists.

We write $v = tk + r$ with $r \in \{1,\ldots,k-1\}$ and $t\in\mathbb{Z}$.
Then $t \geq 1$.
In \cite[Th.~4.2]{beutelspacher1975partial}, a construction of a partial $(k-1)$-spread of size $\sum_{i=1}^{t-1} q^{ki + r} + 1 = \frac{q^v - q^{k+r}}{q^k - 1} + 1$ has been given.
This construction implies that $A_q(v,2k;k) \geq \frac{q^v - q^{k+r}}{q^k - 1} + 1$.
From the same article we know that this construction is optimal whenever $r = 1$~\cite[Th.~4.1]{beutelspacher1975partial}.
Recently, it has been shown that the same is true whenever $k\nmid v$ and $\qnumb{r}{q} < k$ \cite[Theorem 5]{nastase2016maximum}.

Now we show that this result is indeed a direct consequence of the classification of realizable lengths of divisible codes in Theorem~\ref{thm:characterization}.


\begin{corollary}[{{\cite[Theorem 5]{nastase2016maximum}}}]
\label{cor:nastase_sissokho}
Assume that $k \nmid v$ and let $v = tk + r$ with $r \in \{1,\ldots,k-1\}$.
For $\qnumb{r}{q} < k$ we have
\[
	A_q(v,2k;k) \leq \frac{q^v - q^{k+r}}{q^k - 1} + 1\text{.}
\]
\end{corollary}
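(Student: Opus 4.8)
The lower bound $A_q(v,2k;k)\ge \frac{q^v-q^{k+r}}{q^k-1}+1$ is already supplied by the Beutelspacher construction quoted above, so the plan is to prove the matching upper bound. I would take an arbitrary partial $k$-spread $\mathcal{S}$ and pass to its hole set $\mathcal{H}$, whose cardinality is $h = \gauss{v}{1}{q}-\#\mathcal{S}\cdot\gauss{k}{1}{q}$. By Corollary~\ref{lemma_t_complement} (with $\lambda=1$) the set $\mathcal{H}$ is $q^{k-1}$-divisible, so Theorem~\ref{thm:characterization} applies and tells us that $h$ is a \emph{realizable} $q^{k-1}$-divisible cardinality, i.e.\ the leading coefficient of its $S_q(k-1)$-adic expansion is non-negative. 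Since $\frac{\gauss{v}{1}{q}-q^k\gauss{r}{1}{q}}{\gauss{k}{1}{q}}$ simplifies to exactly $\frac{q^v-q^{k+r}}{q^k-1}+1$, the whole statement is equivalent to the single lower bound $h\ge q^k\gauss{r}{1}{q}$, which is what I would aim for.

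Next I would fix the residue class of $h$. Using $q^k\equiv 1\pmod{\gauss{k}{1}{q}}$ together with $v=tk+r$, one checks $\gauss{v}{1}{q}\equiv\gauss{r}{1}{q}\pmod{\gauss{k}{1}{q}}$, hence $h\equiv\gauss{r}{1}{q}$, and the same congruence holds for $q^k\gauss{r}{1}{q}$; as $1\le r\le k-1$ forces $0<\gauss{r}{1}{q}<\gauss{k}{1}{q}$, every such $h$ is automatically positive. The reduction I would exploit is an \emph{upward-closedness} of realizability: adjoining the point set of a $k$-subspace (a $q^{k-1}$-divisible set of size $\gauss{k}{1}{q}$ by Lemma~\ref{lem:qr-div-basic}\ref{lem:qr-div-basic:subspace}) to any $q^{k-1}$-divisible multiset gives, via Lemma~\ref{lem:qr-div-basic}\ref{lem:qr-div-basic:union}, a $q^{k-1}$-divisible multiset of size increased by $\gauss{k}{1}{q}$. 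Thus, within the class modulo $\gauss{k}{1}{q}$, realizability is preserved under adding $\gauss{k}{1}{q}$. Consequently, to get $h\ge q^k\gauss{r}{1}{q}$ it suffices to prove that the single next-smaller member of the class, $h^-:=q^k\gauss{r}{1}{q}-\gauss{k}{1}{q}$, is \emph{not} realizable: any realizable $h<q^k\gauss{r}{1}{q}$ in the class would satisfy $h\le h^-$ and could be lifted to a realization of $h^-$, a contradiction.

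The heart of the argument, and the step I expect to be the main obstacle, is therefore the exact determination of the $S_q(k-1)$-adic expansion of $h^-$. I expect it to be
\[
	h^- = (q-1)\sum_{i=0}^{k-2}\snumb{k-1}{i}{q} + L\,q^{k-1}, \qquad L = q\left(\gauss{r}{1}{q}-k+1\right)-1,
\]
that is, every low coefficient equals $q-1$ and the leading coefficient equals $L$. Since the low coefficients lie in $\{0,\ldots,q-1\}$, the uniqueness statement of Lemma~\ref{lem:unique_representation} reduces the verification to checking this one identity, which collapses to a routine identity among geometric sums (equivalently $(q-1)h^-=q^{k+r}-2q^k+1$). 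The hypothesis $k>\gauss{r}{1}{q}$ enters precisely here: it gives $\gauss{r}{1}{q}-k+1\le 0$, hence $L\le -1<0$, so by Theorem~\ref{thm:characterization} there is no $q^{k-1}$-divisible multiset of size $h^-$. This closes the chain and yields the claimed value of $A_q(v,2k;k)$; it also shows the threshold is sharp, since at $k=\gauss{r}{1}{q}$ one gets $L=q-1\ge 0$.
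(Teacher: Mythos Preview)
Your proof is correct and essentially identical to the paper's: both reduce to showing that the number $h^- = q^k\gauss{r}{1}{q}-\gauss{k}{1}{q} = \gauss{k+r}{1}{q}-2\gauss{k}{1}{q}$ is not a realizable $q^{k-1}$-divisible cardinality, via the very same $S_q(k-1)$-adic expansion with all low coefficients equal to $q-1$ and leading coefficient $q(\gauss{r}{1}{q}-k+1)-1$. The only cosmetic difference is that the paper assumes a partial spread of size exactly $\frac{q^v-q^{k+r}}{q^k-1}+2$ (using implicitly that one may discard blocks) and derives the contradiction directly, whereas you phrase the same reduction through upward-closedness of realizability within the residue class modulo $\gauss{k}{1}{q}$.
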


\begin{proof}
	Assume that $\mathcal{S}$ is a partial $(k-1)$-spread of size $\#\mathcal{S} = \frac{q^v - q^{k+r}}{q^k - 1} + 2$.
	By Lemma~\ref{lem:partial_spread_generic_ub}, there is a $q^{k-1}$-divisible $\F_q$-linear code of effective length $n = \qnumb{v}{q} - \#\mathcal{S}\cdot \qnumb{k}{q} = \qnumb{k+r}{q} - 2\qnumb{k}{q}$.
	We have
	\begin{align}
		\label{eq:nastase_sissokho_Sqadic} & \phantom{ = }\sum_{i=0}^{k-2} (q-1) \snumb{k-1}{i}{q} + \left(q\cdot(\qnumb{r}{q} - k + 1) - 1\right) \snumb{k-1}{k-1}{q}\\
		\notag & = \sum_{i=0}^{k-2} (q^k - q^i) - (k-1)q^k - q^{k-1} + q^k\cdot\qnumb{r}{q} \\
		\notag & = - \left(\frac{q^{k-1} - 1}{q-1} + q^{k-1}\right) + \frac{q^{k+r} - q^k}{q-1}
		= \frac{q^{k+r} - 2q^k + 1}{q-1} = n\text{.}
	\end{align}
	So \eqref{eq:nastase_sissokho_Sqadic} is the $S_q(k-1)$-adic expansion of $n$ and by Theorem~\ref{thm:characterization}, its leading coefficient $q\cdot(\qnumb{r}{q} - k + 1) - 1$ is $\geq 0$.
	Equivalently $k \leq \qnumb{r}{q}$, which is a contradiction.
\end{proof}

\begin{remark}
	Combined with the construction in \cite[Th.~4.2]{beutelspacher1975partial}, Corollary~\ref{cor:nastase_sissokho} shows indeed that $A_q(v,2k;k) = \frac{q^v - q^{k+r}}{q^k - 1} + 1$, which is the full statement of \cite[Theorem 5]{nastase2016maximum}.
\end{remark}

Now we apply the same technique to the cases not covered by Corollary~\ref{cor:nastase_sissokho}.

\begin{corollary}
  \label{cor:partial_spread_ub}
  Let $v = tk + r$ with $r \in \{0,\ldots,k-1\}$ and assume that $\qnumb{r}{q} \geq k$.
  Then
  \[
  A_q(v,2k;k) \le \frac{q^v-q^{k+r}}{q^k-1}+q\left(\qnumb{r}{q} - k + 1\right) + 1\text{.}
  \]
\end{corollary}

\begin{proof}
    Let $z = \qnumb{r}{q} - k + 1 \geq 0$ and assume that $\mathcal{S}$ is a partial $(k-1)$-spread of size $\#\mathcal{S} = \frac{q^{v}-q^{k+r}}{q^k-1}+qz+2$.
    Using $(q-1)\sum_{i=0}^{k-2} q^i \qnumb{k-i}{q} = (k-1)q^k - \qnumb{k-1}{q}$, its set $\mathcal{P}$ of holes is $q^{k-1}$-divisible of size 
    \begin{align*}
    \#\mathcal{P} & = \qnumb{k+r}{q}-(qz + 2)\qnumb{k}{q} \\
	& = q^k \cdot\qnumb{r}{q} - \qnumb{k}{q} -zq^k + z - z\qnumb{k}{q} \\
	& = -z q \qnumb{k-1}{q} + q^k(k-1) - \qnumb{k}{q} \\
	& = -z q \qnumb{k-1}{q} + (q-1)\sum_{i=0}^{k-2} q^i \qnumb{k-i}{q}-q^{k-1} \text{.}
    \end{align*}
    Writing $z=\sum_{i=0}^{k-2}b_iq^i$ with $b_i\in\{0,\ldots,q-1\}$ for $0\le i\le k-3$ and $b_{k-2} \in\mathbb{Z}_{\geq 0}$, we further transform this expression into
    \begin{align*}
    	\#\mathcal{P} & = -\sum_{i=0}^{k-3}\left(q^{i+1}\qnumb{k-i-1}{q} + q^k\qnumb{i}{q}\right) b_i + q^{k-1}\qnumb{k-1}{q}b_{k-2} + (q-1)\sum_{i=0}^{k-2} q^i \qnumb{k-i}{q} - q^{k-1} \\
	& = (q-1)\qnumb{k}{q} + \sum_{i=1}^{k-2}q^i \qnumb{k-i}{q}(q-1-b_{i-1}) + q^{k-1}\left(-\sum_{i=0}^{k-3}q\qnumb{i}{q}b_i - \qnumb{k-1}{q}b_{k-2} - 1\right) \\
	& = \sum_{i=0}^{k-1} a_i \snumb{k-1}{i}{q}\text{,}
    \end{align*}
  which is the $S_q(k-1)$-adic expansion of $\#\mathcal{P}$ with $a_0 = q-1$, $a_i = q-1-b_{i-1} \in\{0,\ldots,q-1\}$ for $i\in\{1,\ldots,k-2\}$ and leading coefficient
  \[
	  a_{k-1} = -\left(\sum_{i=0}^{k-3}q\qnumb{i}{q}b_i + \qnumb{k-1}{q}b_{k-2} + 1\right) < 0\text{.}
  \]
  Contradiction.
\end{proof}


\begin{remark}
	Similar upper bounds as in Corollary~\ref{cor:partial_spread_ub} have been published in~\cite[Th.~2.9]{kurz2017packing} and~\cite[Th.~6]{nastase2016maximumII}.
	In contrast to Corollary~\ref{cor:partial_spread_ub}, the former one uses the projectivity of the code given by the hole set.
	For example, it yields $A_2(17,14;7) \leq 1026$, while Lemma~\ref{lem:partial_spread_generic_ub} (which is stronger but less explicit than Corollary~\ref{cor:partial_spread_ub}) only gives $A_2(17,14;7) \leq 1027$.%
	\footnote{Use Lemma~\ref{lem:sharpened_rounding_unique} with the $S_2(6)$-adic expansions $\qnumb{17}{2} - 1027 \cdot \qnumb{7}{2} = 642 = 0\cdot 127 + 1\cdot 126 + 1\cdot 124 + 1\cdot 120 + 1\cdot 112 + 1\cdot 96 +\underline{1}\cdot 64$ with leading coefficient $1 \geq 0$ and $\qnumb{17}{2} - 1028 \cdot \qnumb{7}{2} = 515 = 1\cdot 127 + 0\cdot 126 + 1\cdot 124 + 1\cdot 120 + 1\cdot 112 + 1\cdot 96 + (\underline{-1})\cdot 64$ with leading coefficient $-1 < 0$.}
\end{remark}

\begin{remark}
	We would like to point out that every single known upper bound on the size of a partial spread can be obtained by Lemma~\ref{lem:partial_spread_generic_ub} or Lemma~\ref{lem:partial_spread_generic_ub_improved}.
\end{remark}

\subsection{An improvement of the Johnson bound for constant dimension subspace codes}
\label{subsect:johnson}
The geometry $\aspace$ serves as input and output alphabet of the so-called
\emph{linear operator channel (LOC)} -- a model for information
transmission in coded packet networks subject to noise
\cite{koetter-kschischang08}. The relevant metrics on the LOC are given by the
\emph{subspace distance}
$ d_S(X,Y):=\dim(X+Y)-\dim(X\cap Y)=2\cdot\dim(X+Y)-\dim(X)-\dim(Y)$,
which can also be seen as the graph-theoretic distance in the Hasse
diagram of $\aspace$, and the \emph{injection distance}
$ d_I(X,Y):=\max\left\{\dim(X),\dim(Y)\right\}-\dim(X\cap Y) $.  A set
$\mathcal{C}$ of subspaces of $\F_q^v$ is called a \emph{subspace 
code}.
For $\#\mathcal{C} \geq 2$, the \emph{minimum (subspace)  distance} of $\mathcal{C}$ is given by
$d = \min\{d_S(X,Y) \mid X,Y\in\mathcal{C}, X \neq Y\}$.
If all elements of $\mathcal{C}$ have the same dimension $k$, we call
$\mathcal{C}$ a \emph{constant-dimension code} and denote its parameters as $[v,d,\#\mathcal{C};k]_q$.
Partial spreads are the same as subspace codes of constant dimension $k$ and minimum subspace distance $d = 2k$.
For a constant-dimension code $\mathcal{C}$ we have $d_S(X,Y) = 2d_I(X,Y)$
for all $X,Y\in\mathcal{C}$, so that we can restrict our attention to the
subspace distance, which has to be even. By $\smax_q(v,d;k)$ we denote the maximum possible cardinality of a 
constant-dimension-$k$ code in $\F_q^v$ with minimum subspace distance at least $d$.
Like in the classical case of codes in the Hamming metric, the determination of the exact 
value or bounds for $\smax_q(v,d;k)$ is a central problem. In this paper we will present some 
improved upper bounds. For a broader background we refer to \cite{etzionsurvey,COSTbook} and for the latest 
numerical bounds to the online tables at \url{http://subspacecodes.uni-bayreuth.de} \cite{TableSubspacecodes}.

For a subspace $U \leq \F_q^v$, the orthogonal subspace with respect to some fixed non-de\-ge\-ne\-rate symmetric bilinear form will be denoted $U^\perp$.
It has dimension $\dim(U^\perp) = v-\dim(U)$.
For $U,W\leq\F_q^v$, we get that $\sdist(U,W)=\sdist(U^\perp,W^\perp)$. 
So, $\smax_q(v,d;k)=\smax_q(v,d;v-k)$ and we can assume $0\le k\le \frac{v}{2}$ in the following. If $d>2k$, 
then $\smax_q(v,d;k)=1$.
Furthermore, we have $\smax_q(v,2;k)=\gauss{v}{k}{q}$.
Things get more interesting for $v,d\ge 4$ and $k\ge 2$.

Let $\mathcal{C}$ be a constant-dimension-$k$ code in $\F_q^v$ with minimum distance $d$. For every point 
$P$, i.e., $1$-subspace, of $\F_q^v$ we can consider the quotient geometry $\PG(\F_q^v/P)$ to deduce that at most 
$\smax_q(v-1,d;k-1)$ elements of $\mathcal{C}$ contain $P$. Since $\PG(\F_q^v)$ contains $\qnumb{v}{q}$ points and 
every $k$-subspace contains $\qnumb{k}{q}$ points, we obtain 
\begin{equation}
  \label{ie_johnson}
  \smax_q(v,d;k)\le \left\lfloor \frac{\qnumb{v}{q}\cdot \smax_q(v-1,d;k-1)}{\qnumb{k}{q}}\right\rfloor
  \text{,}
\end{equation}  
which was named Johnson type bound II 
in \cite{xia2009johnson}. Recursively applied, we obtain
\begin{equation}
  \label{ie_johnson_to_partial_spread}
  \smax_q(v,d;k)\le \left\lfloor \frac{\qnumb{v}{q}}{\qnumb{k}{q}}\cdot\left\lfloor \frac{\qnumb{v-1}{q}}{\qnumb{k-1}{q}}\cdot
  \left\lfloor \dots \cdot\left\lfloor\frac{\qnumb{v'+1}{q}}{\qnumb{d/2+1}{q}}\cdot \smax_q(v',d;d/2) 
  \right\rfloor \dots \right\rfloor \right\rfloor\right\rfloor,
\end{equation}
where $v'=v-k+d/2$.

In the case $d=2k$, any two codewords of $\mathcal{C}$ intersect trivially, meaning that each point of $\PG(\F_q^v)$ is covered by at most a single codeword.
These codes are better known as \emph{partial $k$-spreads}.
If all the points are covered, we have $\#\mathcal{C} = \qnumb{v}{q}/\qnumb{k}{q}$ and $\mathcal{C}$ is called a \emph{$k$-spread}.
From the work of Segre in 1964 \cite[\S VI]{segre1964teoria} we know that $k$-spreads exist if and only 
if $k$ divides $v$. Upper bounds for the size of a partial $k$-spreads are due to Beutelspacher \cite{beutelspacher1975partial} and Drake \& Freeman 
\cite{nets_and_spreads} and date back to 1975 and 1979, respectively. Starting from \cite{kurzspreads} several recent improvements have been obtained. 
Currently the tightest upper bounds, besides $k$-spreads, are given by a list of $21$ sporadic $1$-parametric series and the following 
two theorems stated in \cite{kurz2017packing}:
\begin{theorem}
  \label{main_theorem_1_ps}
  For integers $r\ge 1$, $t\ge 2$, $u\ge 0$, and $0\le z\le \qnumb{r}{q}/2$ with $k=\qnumb{r}{q}+1-z+u>r$ we have
  $\smax_q(v,2k;k)\le lq^k+1+z(q-1)$, where $l=\frac{q^{v-k}-q^r}{q^k-1}$ and $v=kt+r$.   
\end{theorem}

\newcommand{\uu}{\lambda}
\begin{theorem}
  \label{main_theorem_2_ps}
  For integers $r\ge 1$, $t\ge 2$, $y\ge \max\{r,2\}$, 
  $z\ge 0$ with $\uu=q^{y}$, $y\le k$, 
  $k=\qnumb{r}{q}+1-z>r$, $v=kt+r$, and  $l=\frac{q^{v-k}-q^r}{q^k-1}$, we have
  \[
	  \smax_q(v,2k;k)\le 
     lq^k+\left\lceil \uu -\frac{1}{2}-\frac{1}{2}
    \sqrt{1+4\uu\left(\uu-(z+y-1)(q-1)-1\right)} \right\rceil\text{.}
  \]
\end{theorem}
 
The special case $z=0$ in Theorem~\ref{main_theorem_1_ps} covers the breakthrough $\smax_q(kt+r,2k;k)=1+\sum_{s=1}^{t-1}q^{sk+r}$ for 
$0<r<k$ and $k>\qnumb{r}{q}$ by N{\u{a}}stase and Sissokho \cite{nastase2016maximum} from 2016, which itself covers the result of Beutelspacher.  
The special case $y=k$ in Theorem~\ref{main_theorem_2_ps} covers the result by Drake \& Freeman. A contemporary survey of the best known upper bounds 
for partial spreads can be found in \cite{honold2016partial}.  

Using the tightest known upper bounds for the sizes of partial $k$-spreads, there are only two known cases with $d<2k$ where Inequality~(\ref{ie_johnson_to_partial_spread}) is not sharp:
$\smax_2(6,4;3)=77<81$ \cite{hkk77} and $\smax_2(8,6;4) = 257 < 289$ \cite{new_bounds_subspaces_codes,code257}.
For the details how 
the proposed upper bounds for constant-dimension codes relate to Inequality~(\ref{ie_johnson_to_partial_spread}) we refer the 
interested reader to \cite{MR3063504,heinlein2017asymptotic}.     
The two mentioned improvements of Inequality~(\ref{ie_johnson_to_partial_spread}) involve massive computer calculations.
In contrast to that, the improvements in this article are based on a self-contained theoretical argument and do not need any external computations.

\begin{theorem}
  \label{thm:johnson_improved}
  \[
  \smax_q(v,d;k)
	  \le \leftllfloor \frac{\qnumb{v}{q}\cdot \smax_q(v-1,d;k-1)}{\qnumb{k}{q}}\rightrrfloor_{q^{k-1}} \text{.}
  \]
\end{theorem}

\begin{proof}
  Let $\mathcal{C}$ be a $[v,d,\#\mathcal{C};k]_q$ subspace code and $\mathcal{P} = \uplus_{B\in\mathcal{C}} \gauss{B}{1}{q}$ its associated multiset of points.
  As in the reasoning for the Johnson bound~\eqref{ie_johnson}, the maximum point multiplicity of $\mathcal{P}$ is at most $\lambda=\smax_q(v-1,d;k-1)$.
  Lemma~\ref{lem:pack_cover}\ref{lem:pack_cover:pack} concludes the proof.
\end{proof}


\begin{remark}
	Similarly as in Lemma~\ref{lem:partial_spread_generic_ub_improved}, Theorem~\ref{thm:johnson_improved} could possibly be sharpened further in the following way, at the price that the involved numbers are much harder to evaluate:
	Let $n$ be the largest integer such that there exists a $q^{k-1}$-divisible $\F_q$-linear code of dimension $\leq v$, maximum point multiplicity $\leq \smax_q(v-1,d;k-1)$ and length $\smax_q(v-1,d;k-1)\qnumb{v}{q} - n\qnumb{k}{q}$.
Then $\smax_q(v,d;k) \leq n$.
\end{remark}

\begin{remark}
  With $v'=v-k+d/2$, the iterated application of Theorem~\ref{thm:johnson_improved} yields
  \[
  \smax_q(v,d;k)\le \bigllfloor \frac{\qnumb{v}{q}}{\qnumb{k}{q}}\cdot\bigllfloor \frac{\qnumb{v-1}{q}}{\qnumb{k-1}{q}}\cdot
  \bigllfloor \cdots\bigllfloor\frac{\qnumb{v'+1}{q}}{\qnumb{d/2+1}{q}}\cdot \smax_q(v',d;d/2) 
  \bigrrfloor_{q^{d/2-1}} \cdots \bigrrfloor_{q^{k-3}} \bigrrfloor_{q^{k-2}}\bigrrfloor_{q^{k-1}}\text{,}\footnotemark
  \]
  \footnotetext{Expressions of the form $\llfloor\frac{a}{b}\cdot c\rrfloor_{q^r}$ should be read as $\llfloor\frac{a\cdot c}{b}\rrfloor_{q^r}$, compare to Remark~\ref{rem:divisible_gauss_bracket}\ref{rem:divisible_gauss_bracket:formal}.}
  which is an improvement of~\eqref{ie_johnson_to_partial_spread}. 
\end{remark}
  

\begin{example}
So far, the best known upper bound on $\smax_2(9,6;4)$ has been given by the Johnson bound~\eqref{ie_johnson}, using $\smax_2(8,6;3)=34$:
\[
	\smax_2(9,6;4) \leq \left\lfloor\frac{\qnumb{9}{2}}{\qnumb{4}{2}}\cdot\smax_2(8,6;3)\right\rfloor
	= \left\lfloor\frac{2^9-1}{2^4-1}\cdot 34\right\rfloor = 1158\text{.}
\]
To improve that bound by Theorem~\ref{thm:johnson_improved}, we are looking for the largest integer $n$ such that a $q^{k-1}$-divisible multiset of size
\[
	M(n)
	= \qnumb{9}{2}\cdot\smax_2(8,6;3) - n \cdot \qnumb{4}{2}
	= 17374 - 15n
\]
exists.

This question can be investigated with Theorem~\ref{thm:characterization}.
We have $S_2(3) = (15, 14, 12, 8)$.
The $S_2(3)$-adic expansion of $M(1157) = 17374 - 15\cdot 1157 = 19$ is $1\cdot 15 + 0\cdot 14 + 1\cdot 12 + (-1)\cdot 8$.
As the leading coefficient $-1$ is negative, there is no $8$-divisible multiset of points of size $19$ by Theorem~\ref{thm:characterization}.
The $S_2(3)$-adic expansion of $M(1156) = 34$ is $0\cdot 15 + 1\cdot 14 + 1\cdot 12 + 1\cdot 8$.
As the leading coefficient $1$ is non-negative, there exists a $8$-divisible multiset of points of size $34$.
Therefore by Lemma~\ref{lem:sharpened_rounding_unique}
\[
	\smax_2(9,6;4)\le \leftllfloor\frac{\qnumb{9}{2}}{\qnumb{4}{2}}\cdot\smax_2(8,6;3)\rightrrfloor_{2^3}
	= \llfloor 17374 / 15\rrfloor_{2^3}
	=1156\text{,}
\]
which improves the original Johnson bound~\eqref{ie_johnson} by $2$.
\end{example}

\begin{lemma}
The improvement of Theorem~\ref{thm:johnson_improved} over the original Johnson bound~\eqref{ie_johnson} is at most $(q-1)(k-1)$.
\end{lemma}

\begin{proof}
	By Lemma~\ref{lem:sharpened_rounding:delta}, the improvement is at most
	\[
	\left\lceil \frac{F_q(k-1) + 1}{\qnumb{k}{q}}\right\rceil 
	= \left\lceil \frac{(k-1)q^k - \qnumb{k}{q} + 1}{\qnumb{k}{q}}\right\rceil
	= \left\lceil (q-1)(k-1) - 1 + \frac{k}{\qnumb{k}{q}}\right \rceil = (q-1)(k-1)\text{.}
	\]
\end{proof}
 


\begin{proposition}
	\label{prop:v11d6k4}
  For all prime powers $q\ge 2$ we have
  \begin{align*}
	  \smax_q(11,6;4) & \le q^{14}+q^{11}+q^{10}+2q^7+q^6+q^3+q^2-2q+1 \\
	  & = (q^2-q+1)(q^{12}+q^{11}+q^8+q^7+q^5+2q^4+q^3-q^2-q+1)\text{.}
  \end{align*}
\end{proposition}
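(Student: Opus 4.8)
The plan is to apply Theorem~\ref{main_theorem} to the triple $(v,d,k)=(11,6,4)$, so that the relevant divisibility parameter is $q^{k-1}=q^{3}$ and the maximum multiplicity of the associated multiset is $\lambda=\smax_q(10,6;3)$. The first step is therefore to pin down this inner value exactly. Writing $10=3\cdot 3+1$, i.e.\ $t=3$ and $r=1$, we have $k=3>\gauss{1}{1}{q}=1$, so Corollary~\ref{cor:nastase_sissokho} applies and gives
\[
  \smax_q(10,6;3)=\frac{q^{10}-q^{4}}{q^{3}-1}+1=q^{7}+q^{4}+1\text{.}
\]

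With $\lambda=q^{7}+q^{4}+1$ in hand, I would next assemble the ingredients of Theorem~\ref{main_theorem}. A routine polynomial division of $\gauss{11}{1}{q}\cdot(q^{7}+q^{4}+1)$ by $\gauss{4}{1}{q}$ yields the floor
\[
  \left\lfloor \frac{q^{11}-1}{q^{4}-1}\cdot (q^{7}+q^{4}+1)\right\rfloor
  = q^{14}+q^{11}+q^{10}+2q^{7}+q^{6}+q^{3}+q^{2}-1\text{,}
\]
which is precisely the unimproved Johnson bound, together with the remainder $m(0)=q^{2}+2q+2$. Since the target bound is smaller than this floor by $2(q-1)$, and Theorem~\ref{main_theorem} subtracts $\delta+1$, the parameter to aim for is $\delta=2q-3$. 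Substituting into $m(\delta)=m(0)+\delta\gauss{4}{1}{q}$ and simplifying, the cardinality to be excluded is $m(2q-3)=2q^{4}-q^{3}+q-1$.

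The heart of the argument is then to show that no $q^{3}$-divisible multiset of points of cardinality $m(2q-3)$ exists, which by Theorem~\ref{thm:characterization} reduces to checking that the leading coefficient of the $S_q(3)$-adic expansion of $2q^{4}-q^{3}+q-1$ is negative. Running Algorithm~1 (or verifying the identity directly) produces
\[
  2q^{4}-q^{3}+q-1=(q-1)\snumb{3}{0}{q}+\snumb{3}{1}{q}+(q-1)\snumb{3}{2}{q}-2\,\snumb{3}{3}{q}\text{,}
\]
so the leading coefficient is $a_{3}=-2<0$, whence no such multiset exists. Theorem~\ref{main_theorem} with $\delta=2q-3$ then gives $\smax_q(11,6;4)\le (q^{14}+q^{11}+q^{10}+2q^{7}+q^{6}+q^{3}+q^{2}-1)-(2q-3)-1$, which equals the claimed polynomial; the stated factorization is confirmed by expanding $(q^{2}-q+1)$ against the second factor.

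I expect the main obstacle to be organizational rather than conceptual: every computation must hold uniformly for all prime powers $q\ge 2$ at once. Concretely, one must confirm that the digits $a_{0}=q-1$, $a_{1}=1$, $a_{2}=q-1$ returned by Algorithm~1 genuinely lie in $\{0,\dots,q-1\}$ for every $q$, and that the leading coefficient stays equal to $-2$ independently of $q$ (rather than changing sign for small or large $q$). The boundary case $q=2$, where several digits collapse to the extreme value $q-1=1$, is worth checking by hand as a sanity test before trusting the general symbolic calculation.
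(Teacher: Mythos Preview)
Your proposal is correct and follows essentially the same route as the paper's own proof: compute $\smax_q(10,6;3)=q^{7}+q^{4}+1$, read off $m(0)=q^{2}+2q+2$ from the division, take $\delta=2q-3$, and verify via the $S_q(3)$-adic expansion that $m(2q-3)=2q^{4}-q^{3}+q-1$ has leading coefficient $-2$. The only cosmetic difference is that the paper justifies $\smax_q(10,6;3)$ by the remark ``since $10\equiv 1\pmod 3$'' rather than citing Corollary~\ref{cor:nastase_sissokho} explicitly.
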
 

\begin{proof}
  Since $10\equiv 1 \pmod 3$ we have $\smax_q(10,6;3)=q^7+q^4+1$.
  Let
  \[
	  M(n) = \qnumb{11}{q}\cdot (q^7 + q^4 + q) - \qnumb{4}{q}\cdot n\text{.}
  \]
  For $n^{\ast} = q^{14}+q^{11}+q^{10}+2q^7+q^6+q^3+q^2-2q+1$ one computes
  \begin{align*}
  	& \phantom{{}={}} M(n^{\ast}+1) \\
	& = 2q^4 - q^3 + q - 1 \\
	& = (q-1)\cdot (q^3+q^2+q+1) +1\cdot (q^3+q^2+q) +(q-1)\cdot (q^3+q^2) + (-2)\cdot q^3
\end{align*}
  where the last expression is the $S_q(3)$-adic expansion.
  As the leading coefficient $-2$ is negative, by Theorem~\ref{thm:characterization} there exists no $q^3$-divisible multiset of size $M(n^{\ast}+1)$.
  Therefore by Theorem~\ref{thm:johnson_improved}
  \[
      \smax_q(11,6;4) \leq\leftllfloor\frac{\qnumb{11}{q}\cdot (q^7 + q^4 + q)}{\qnumb{4}{2}}\rightrrfloor_{q^3}
      \leq n^{\ast}\text{.}
  \]
%
%
%
%
\end{proof}

\begin{remark}
	In the proof of Proposition~\ref{prop:v11d6k4} we have in fact
	\[
	\leftllfloor\frac{\qnumb{11}{q}\cdot (q^7 + q^4 + q)}{\qnumb{4}{2}}\rightrrfloor_{q^3} = n^{\ast}\text{.}
	\]
	To see this, we use Lemma~\ref{lem:sharpened_rounding_unique} and compute
	\begin{align*}
  	& \phantom{{}={}} M(n^{\ast}) \\
	& = 2q^4 + q^2 + 2q \\
	& = 0\cdot (q^3+q^2+q+1) +2\cdot (q^3+q^2+q) +(q-1)\cdot (q^3+q^2) + (q-2)\cdot q^3
	\end{align*}
	where the last expression is the $S_q(3)$-adic expansion.
  As the leading coefficion $q-2$ is non-negative, by Theorem~\ref{thm:characterization} there exists a $q^3$-divisible multiset of size $M(n^{\ast})$.
\end{remark}

\section{Divisible codes and the linear programming method}
\label{sect:linear_programming}

The famous \emph{MacWilliams Identities},
\cite{macwilliams63}
\begin{equation}
  \label{mac_williams_identies}
  \sum_{j=0}^{n-i} {{n-j}\choose i} A_j=q^{k-i}\cdot \sum_{j=0}^i
  {{n-j}\choose{n-i}}A_j^\perp\quad\text{for }0\le i\le n, 
\end{equation} 
relate the weight distributions $(A_i)$, $(A_i^\perp)$ of the (primal)
code $C$ and the dual code
$C^\perp=\{\vek{y}\in\F_q^n;x_1y_1+\dots+x_ny_n=0\text{ for all
}\vek{x}\in C\}$. 
Since the $A_i$ and $A_i^\perp$ count codewords of weight $i$, they have to be non-negative integers. 
In our context we have $A_0=A_0^\perp=1$, $A_1^\perp=0$, and $A_i=0$ for all $i$ that are not divisible by $q^r$. 
Treating the remaining $A_i$ and $A_i^\perp$ as non-negative real variables one can check feasibility via 
linear programming, which is known as the \emph{linear programming method} for the existence of codes, see 
e.g.~\cite{delsarte1972bounds,bierbrauer2005introduction}. 

As demonstrated in e.g.~\cite{honold2016partial}, the average argument of Lemma~\ref{lemma_average} is equivalent to the linear programming 
method applied to the first two MacWilliams Identities, i.e., $i=0,1$. So, the proof of Theorem~\ref{thm:characterization} 
shows that invoking the other equations gives no further restrictions for the possible lengths of divisible codes. 
This is different in the case of partial $k$-spreads, i.e., the determination of $\smax_q(v,2k;k)$. Here the associated multisets of points are indeed sets that correspond to projective linear codes, which are
characterized by the additional condition $\hdist(C^\perp)\geq 3$, i.e., $A_2^\perp=0$. The upper bound of N{\u{a}}stase and Sissokho 
can be concluded from the first two MacWilliams Identities, i.e., the average argument of Lemma~\ref{lemma_average}, see Corollary~\ref{cor:partial_spread_ub}.
Theorem~\ref{main_theorem_1_ps} and Theorem~\ref{main_theorem_2_ps} are based on the 
first three MacWilliams Identities while also the forth MacWilliams Identity is needed 
for the mentioned $21$ sporadic $1$-parametric series listed in \cite{kurz2017packing}.

\section{Conclusion}
\label{sect:conclusion}

We would like to mention the following open questions:
\begin{itemize}
	\item In this article, the lengths of $\Delta$-divisible codes over $\F_q$ have been classified for $\Delta = q^r$ with $r$ a non-negative integer.
	This leaves the cases open where $\Delta$ is only a power of the characteristic of $\F_q$.
	\item As discussed in Remark~\ref{rem:pack_cover:improved}, the applications in Galois geometries might be improved when the restricted point multiplicity of the associated divisible multisets of points is taken into account.
	This condition may further restrict the set of realizable lengths.
	The particular case $\lambda = 1$ corresponds to \emph{projective} linear divisible codes, where a general characterization of the realizable lengths is wide open and appears to be more difficult than in the non-projective case of Theorem~\ref{thm:characterization}.
	For the corresponding Frobenius number the sharpest upper bound in the binary case $q=2$ is $\frobeniusproj{r}{2}\le 2^{2r}-2^{r-1}-1$.
	The lengths of projective $2$-, $4$-divisible and $8$-divisible linear binary codes as well as $3$-divisible linear ternary codes have been completely determined~\cite{projective_divisible_binary_codes,arXiv1812_05957}, but there are open cases already for $(q,\Delta) = (2,16),(3,9),(5,5)$.
\end{itemize}

\section*{Acknowledgement}
The second author was supported in part by the grant KU 2430/3-1 -- Integer Linear Programming Models for 
Subspace Codes and Finite Geometry -- from the German Research Foundation.


\end{document}